\definecolor{darkgreen}{rgb}{0,.7,.3}
\theoremstyle{definition}
\newtheorem{definition}{Definition}[section]
\newtheorem{remark}[definition]{Remark}
\newtheorem{algorithm}[definition]{Algorithm}
\theoremstyle{plain}
\newtheorem{lemma}[definition]{Lemma}
\newtheorem{theorem}[definition] {Theorem}
\newtheorem{proposition}[definition] {Proposition}
\newtheorem{main}[definition]{Main Theorem}
\newtheorem{corollary}[definition]{Corollary}
\begin{document}
\title{A Magnus theorem for some amalgamated products}
\author{Carsten Feldkamp}
\date{\today}
\maketitle
\abstract{A group $G$ possesses the Magnus property if for every two elements $u,v \in G$ with the same normal closure, $u$ is conjugate in $G$ to $v$ or $v^{-1}$.
We prove the Magnus property for some amalgamated products including the fundamental group of a closed non-orientable surface of genus 3.
This answers a question of O. Bogopolski and K. Sviridov, who obtained the analogous result for genus $g > 3$.}

\section{Introduction}

A group $G$ possesses the Magnus property if for every two elements $u,v \in G$ with the same normal closure, $u$ is conjugate in $G$ to $v$ or $v^{-1}$.
The Magnus property was named after W. Magnus who proved the so-called Freiheitssatz (see Theorem \ref{Magnus}) and the Magnus property for free groups \cite{ArtMagnus}. Since then, many mathematicians proved or disproved the Freiheitssatz and the Magnus property for certain classes of groups (see e.g. \cite{ArtSurGr}, \cite{ArtOneRel}, \cite{ArtEdjvet}, \cite{ArtHowie81}, \cite{ArtHowSurGr}).

Let $\pi_{1}(S_{g}^{+})$, respectively $\pi_{1}(S_{g}^{-})$, be the fundamental group of the compact orientable, respectively non-orientable, surface of genus~$g$.
The Magnus property of $\pi_{1}(S_{g}^{+})$ for all $g$ was proved independently by O. Bogopolski \cite{ArtSurGr} (by using algebraic methods) and by J.~Howie \cite{ArtHowSurGr}
(by using topological methods). As observed in \cite{ArtSurGr}, there is a third, model theoretic method: two groups $G_1$, $G_2$
are called elementarily equivalent if their elementary theories coincide: $Elem(G_1)=Elem(G_2)$, see \cite{ArtTheorie}.
It is easy to show that elementarily equivalent groups either both possess
the Magnus property, or both do not possess it.
Since the groups $\pi_1(S_g^+)$ for $g\geqslant 2$ and $\pi_1(S_g^-)$ for $g\geqslant 4$ are elementarily equivalent
to the free group on two generators, all these groups possess the Magnus property.

In \cite{ArtOneRel}, O. Bogopolski and K. Sviridov proved the following theorem:

\begin{theorem} \label{ArtikelBSMain} \emph{\textbf{\cite[Main Theorem]{ArtOneRel}}}
Let $G=\langle a,b,y_{1}, \dots , y_{e} \mid [a,b]uv \rangle$, where $e \geqslant 2$, $u,v$ are non-trivial reduced words in letters $y_{1}, \dots, y_{e}$, and $u,v$ have no common letters. Let $r,s \in G$ be two elements with the same normal closures. Then $r$ is conjugate to $s$ or $s^{-1}$.
\end{theorem}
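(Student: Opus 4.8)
The plan is to exploit the decomposition of $G$ as an amalgamated free product forced by the hypothesis that $u$ and $v$ share no letters. Writing $Y_u$ and $Y_v$ for the (disjoint) sets of generators occurring in $u$ and $v$, set $A=\langle a,b\rangle * F(Y_u)$ and $B=F(Y_v)$, both free. The single defining relation reads $[a,b]u=v^{-1}$, so identifying the infinite cyclic subgroups $\langle [a,b]u\rangle\leqslant A$ and $\langle v^{-1}\rangle\leqslant B$ presents $G$ as $G=A*_{C}B$ with $C\cong\mathbb{Z}$. The assumption $e\geqslant 2$ together with the non-triviality of $u$ and $v$ guarantees that $Y_u,Y_v$ are both non-empty, so both factors are non-trivial free groups and hence possess the Magnus property \cite{ArtMagnus}. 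I would first record two structural facts about the edge group: the generator $[a,b]u$ is cyclically reduced of syllable length $2$ in $\langle a,b\rangle*F(Y_u)$ and hence is not a proper power, so $\langle [a,b]u\rangle$ is malnormal in $A$; on the $B$-side one only has $\langle v^{-1}\rangle\leqslant\langle v_0\rangle$ for the maximal root $v_0$ of $v$, and this asymmetry is where I expect most of the bookkeeping to live.

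With the splitting in place, the second step is to run the standard conjugacy theory for amalgams (normal forms, cyclic reduction, and the criterion that cyclically reduced elements of syllable length $\geqslant 2$ are conjugate exactly when related by a cyclic permutation of syllables followed by conjugation by an edge element), organised through the action of $G$ on the Bass--Serre tree $T$. Each of $r,s$ is then either \emph{elliptic} (conjugate into $A$ or into $B$) or \emph{hyperbolic}. As a first, robust constraint I would pass to the abelianization: since $\langle\langle r\rangle\rangle=\langle\langle s\rangle\rangle$, the images of $r$ and $s$ generate the same cyclic subgroup of $G^{\mathrm{ab}}$, which forces $\bar r=\pm\bar s$ modulo torsion and fixes the sign bookkeeping that distinguishes $s$ from $s^{-1}$.

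The heart of the argument, and the step I expect to be hardest, is to upgrade ``same normal closure'' to ``same conjugacy class up to inverse'', which includes showing that $r$ and $s$ have the same type in the first place. For elliptic elements conjugate into a common factor, the difficulty is that the normal closure taken in $G$ is strictly larger than the one taken in the free factor, so the factor's Magnus property cannot be applied naively; I would control the intersection $\langle\langle r\rangle\rangle\cap A$ by combining malnormality of $C$ in $A$ with a Freiheitssatz-type argument (Theorem~\ref{Magnus}) in the two-relator quotient $G/\langle\langle r\rangle\rangle=\langle a,b,y_1,\dots,y_e\mid [a,b]uv,\,r\rangle$, which pins $r$, up to conjugacy, to a recognisable position relative to the relator. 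For hyperbolic elements I would argue that the conjugates of $r$ of minimal syllable length occurring in $\langle\langle r\rangle\rangle$ already recover the cyclic word of $r$, using malnormality on the $A$-side to rule out unexpected coincidences produced by cancellation across the edge group.

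Finally I would assemble the cases: the abelianization and the normal-closure analysis force matching types, and within each type the structural argument of the third step yields that $r$ is conjugate to $s$ or to $s^{-1}$. The recurring obstacle throughout is the asymmetry of the edge group---malnormal in $A$ but only contained in the malnormal $\langle v_0\rangle$ inside $B$---which I anticipate will require isolating the subcase where $r$ or $s$ is conjugate into $B$ and treating the root $v_0$ with extra care; this is precisely the difficulty that becomes unavoidable when $u$ and $v$ are allowed to coincide, explaining the failure of the present hypotheses at genus $3$.
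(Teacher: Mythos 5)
Your amalgam decomposition $G=A\ast_C B$ with $A=\langle a,b\rangle \ast F(Y_u)$, $B=F(Y_v)$ and $C=\langle [a,b]u\rangle=\langle v^{-1}\rangle$ is correct, and so are the structural facts you record ($C$ is malnormal in $A$ because $[a,b]u$ is not a proper power; on the $B$-side one only has $\langle v^{-1}\rangle\leqslant\langle v_0\rangle$). But what you have written is a plan, not a proof: every step that carries the actual content of the theorem is deferred with an ``I would'' or ``I expect''. Concretely: (i) you give no argument that $r$ and $s$ have the same type in the Bass--Serre tree --- the abelianization constraint $\bar r=\pm\bar s$ modulo torsion is correct but is far too weak to distinguish elliptic from hyperbolic elements; (ii) in the elliptic case you name the real obstacle yourself (the intersection $\langle\!\langle r\rangle\!\rangle_G\cap A$ is strictly larger than $\langle\!\langle r\rangle\!\rangle_A$, so the Magnus property of the free factor cannot be invoked) but offer no mechanism to overcome it beyond the phrase ``pins $r$ to a recognisable position''; (iii) in the hyperbolic case, the assertion that conjugates of minimal syllable length inside $\langle\!\langle r\rangle\!\rangle$ ``recover the cyclic word of $r$'' is essentially a restatement of the Magnus property itself: elements of a normal closure are products of arbitrarily many conjugates of $r^{\pm1}$, and controlling the cyclic reductions of such products in an amalgam --- where cancellation occurs across the edge group, precisely on the non-malnormal $B$-side --- is the whole difficulty, and none of the tools you introduce is shown to accomplish it. So there is a genuine gap; in fact the core of the argument is missing.

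For comparison, the paper obtains this statement as a special case of its Main Theorem (set $u:=uv$, which is non-trivial and reduced since $u,v$ share no letters), and its engine is entirely different from a splitting of $G$ over the relator. After reducing to Proposition~\ref{prop1}, the proof passes to the kernel $N$ of the exponent-sum map $H\to\mathbb{Z}$, $x\mapsto 1$, which is a free group of infinite rank (Proposition~\ref{Nfree}); equality of normal closures in $H$ becomes equality in $N$ of the normal closures of the infinite families $\{\widetilde r_i\}$ and $\{\widetilde s_i\}$ of shifted conjugates, and the $\alpha$-$\omega$ machinery together with the iterated Freiheitssatz-based amalgam decompositions of the quotients $N/\langle\!\langle \widetilde r_i,\dots,\widetilde r_j\rangle\!\rangle$ (Lemmas~\ref{LemH}, \ref{LemS}, \ref{new}) reduces everything to equality of normal closures of \emph{single} elements in the free group $N$, where Magnus' theorem applies; the residual case lands in $F_2=\langle x,b\rangle$. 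Notably, the splitting you propose does appear in the paper, but only in Section~5, formula (5.1), as $H=\widetilde H\ast_{[x^k,b]=u^{-1}}\langle y_m\mid 1\leqslant m\leqslant n\rangle$, and only for the case $||\widetilde r||_{\alpha,\omega}<1$; even there, the key embedding $\widetilde H/(\!(\widetilde r)\!)\hookrightarrow H/\langle\!\langle\widetilde r\rangle\!\rangle$ is established by a Tietze-transformation case analysis on whether some power $[x^k,b]^p$ lies in $(\!(\widetilde r)\!)$, not by Bass--Serre conjugacy theory. That case analysis is the kind of concrete mechanism your steps (ii) and (iii) would need before your outline could become a proof.
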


As a corollary of that theorem (\!\!\cite[Corollary 1.3]{ArtOneRel}), they showed the Magnus property of $\pi_{1}(S_{g}^{-})$ for  $g \geqslant 4$. Since the Magnus property trivially holds for genus $1$ and $2$, the authors asked, whether it also holds for the fundamental group of the non-orientable surface of genus 3. With our Main Theorem, that proves the Magnus property for a slightly larger subclass of one-relator groups than in \mbox{Theorem \ref{ArtikelBSMain}}, we answer this question positively. The difficulty with genus $g=3$
is essential since it is well known that the group $\pi_1(S_3^-)=\langle x,y,z\,|\, x^2y^2z^2\rangle$ is not even existentially equivalent to a free group $F_{n}$ on $n$ generators. However, in large parts, our proof follows the proof of the Main Theorem in \cite{ArtOneRel}.

\begin{main} \label{Main}
Let $G=\langle a,b,y_{1},\dots,y_{n} \mid [a,b]u \rangle$, where $n\in \mathbb{N}$ and $u$ is a non-trivial reduced word in the letters $y_{1},\dots,y_{n}$. Then $G$ possesses the Magnus property.
\end{main}

Using the isomorphism $\varphi$ between $\pi_1(S_3^-) = \langle x,y,z \, | \, x^2y^2z^2 \rangle$ and $\widetilde{H}=\langle a,b,c \mid [a,b]c^{2} \rangle$ defined by $\varphi(x)=ca^{-1}, \varphi(y)=b^{-1}c^{-1}$ and $\varphi(z)=cbcac^{-1}$, we get the following corollary.

\begin{corollary} \label{Corgen3}
The group $\pi_{1}(S_{3}^{-})$ possesses the Magnus property.
\end{corollary}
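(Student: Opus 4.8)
The plan is to derive the statement from the Main Theorem by transporting it along the given isomorphism. The starting observation is that the Magnus property is invariant under group isomorphisms: if $\psi\colon G_1\to G_2$ is an isomorphism, then $\psi$ carries the normal closure of any element onto the normal closure of its image and preserves conjugacy, so two elements of $G_1$ have the same normal closure (respectively, one is conjugate to the other or to its inverse) if and only if their $\psi$-images do. Hence $G_1$ enjoys the Magnus property exactly when $G_2$ does, and it suffices to prove the property for a single group isomorphic to $\pi_1(S_3^-)$.

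I would take that group to be $\widetilde{H}=\langle a,b,c\mid [a,b]c^2\rangle$. The key point is that $\widetilde{H}$ is literally an instance of the Main Theorem: setting $n=1$ and renaming $y_1=c$, its single defining relator is $[a,b]u$ with $u=c^2$, which is a non-trivial reduced word in the one letter $c$. The Main Theorem therefore applies verbatim and yields that $\widetilde{H}$ possesses the Magnus property. Combining this with the isomorphism $\varphi\colon\pi_1(S_3^-)\to\widetilde{H}$ and the invariance noted above gives the corollary.

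The only part that genuinely has to be checked is that $\varphi$, defined on generators by $\varphi(x)=ca^{-1}$, $\varphi(y)=b^{-1}c^{-1}$, $\varphi(z)=cbcac^{-1}$, really is an isomorphism; this is where the (entirely routine) work lies. Concretely I would verify three things: that $\varphi$ is well defined, i.e.\ that the image $\varphi(x)^2\varphi(y)^2\varphi(z)^2$ of the surface relator reduces to the identity in $\widetilde{H}$ once the relation $[a,b]c^2=1$ is applied; that $\varphi$ is surjective, by expressing $a$, $b$, $c$ as words in $ca^{-1}$, $b^{-1}c^{-1}$, $cbcac^{-1}$; and that $\varphi$ is injective, most cleanly by writing down the inverse homomorphism on $a,b,c$ and checking that both composites are the identity. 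Equivalently, one can realize the passage between the two one-relator presentations as an explicit finite sequence of Tietze transformations. None of this is conceptually hard; the bookkeeping in producing and verifying the inverse map is the only mild obstacle, and once $\varphi$ is confirmed to be an isomorphism the proof is complete.
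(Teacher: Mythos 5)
Your proposal is correct and follows essentially the same route as the paper: the paper likewise obtains the corollary by applying the Main Theorem to $\widetilde{H}=\langle a,b,c \mid [a,b]c^{2}\rangle$ (the case $n=1$, $u=c^{2}$) and transporting the Magnus property along the stated isomorphism $\varphi$. The only difference is one of exposition --- the paper treats the isomorphism-invariance and the verification that $\varphi$ is an isomorphism as immediate, while you spell out what needs checking.
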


In the proof of Theorem \ref{ArtikelBSMain}, the authors of \cite{ArtOneRel} used an automorphism of $G$ with certain convenient properties. This automorphism is in general absent for the group given in our Main Theorem. For example, it is absent for the group $\pi_1(S_3^-)$. So we introduce an additional tool which we call $\alpha$- and $\omega$-limits (see Section \ref{secalphaomega}).
Together with the results of \cite{ArtOneRel,ArtSurGr,ArtHowSurGr} Corollary \ref{Corgen3} implies the following.

\begin{corollary} The fundamental groups of all compact surfaces possess the Magnus property.\end{corollary}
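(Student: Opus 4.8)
The plan is to derive the corollary by combining the classification of compact surfaces with the Magnus property established (here or in the cited works) for each resulting family of fundamental groups. So the proof will be a case analysis over the list produced by the classification theorem: a compact surface is either closed orientable, closed non-orientable, or has nonempty boundary, and I would treat each type in turn.

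First I would dispose of the surfaces with nonempty boundary. Each such compact surface deformation retracts onto a finite wedge of circles, so its fundamental group is free of finite rank; by Magnus's classical theorem \cite{ArtMagnus} free groups possess the Magnus property, which settles this case. Next come the closed orientable surfaces: for every genus $g$ the group $\pi_1(S_g^+)$ has the Magnus property by \cite{ArtSurGr} and \cite{ArtHowSurGr}. The degenerate low-genus cases are immediate and can be recorded explicitly if desired, since $\pi_1(S_0^+)$ is trivial and $\pi_1(S_1^+)\cong\mathbb{Z}^2$ is free abelian, where having the same normal closure forces $\langle u\rangle=\langle v\rangle$ and hence $u=v^{\pm1}$.

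Finally I would treat the closed non-orientable surfaces $S_g^-$. For $g\geqslant 4$ the Magnus property is the corollary of the Main Theorem of \cite{ArtOneRel}; for $g=3$ it is exactly Corollary \ref{Corgen3} proved above; and the remaining cases $g=1,2$ are elementary, since $\pi_1(S_1^-)\cong\mathbb{Z}/2\mathbb{Z}$ is finite cyclic and $\pi_1(S_2^-)$ is the Klein bottle group, for both of which the property is checked directly. As every compact surface is of one of the three listed types and each type has now been covered, the corollary follows.

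I do not expect a genuine obstacle here: the mathematical content lies entirely in the cases already established (above all Corollary \ref{Corgen3}, together with \cite{ArtOneRel,ArtSurGr,ArtHowSurGr}), and the role of this argument is only to assemble them through the surface classification. The single point deserving care is the \emph{completeness} of the case division — in particular, not forgetting the bounded surfaces, whose fundamental groups are free, and the degenerate closed cases of small genus, which is where an oversight would most easily creep in.
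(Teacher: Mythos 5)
Your proposal is correct and follows essentially the same route as the paper, which derives this corollary immediately by combining Corollary \ref{Corgen3} with the results of \cite{ArtOneRel,ArtSurGr,ArtHowSurGr} (and Magnus' theorem for free groups covering the bounded surfaces). Your write-up merely makes explicit the case analysis via the classification of compact surfaces that the paper leaves implicit.
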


We will start the proof of our main theorem by recapitulating the notation of \cite[Section 3]{ArtOneRel} with some small alterations. In Section 3 we introduce $\alpha$- and $\omega$-limits, $\alpha$-$\omega$-length $|| \cdot ||_{\alpha,\omega}$ and suitable elements. In Section 4 we give the proof for the case, where the $\alpha$-$\omega$-length of suitable elements is positive (the consideration of this case is close to that in \cite{ArtOneRel}). In Section 5 we complete the proof in the remaining case.

\section{Reduction to a new group and left/right bases} \label{leftright}

We denote the normal closure of an element $g$ in a group $G$ by $\langle \! \langle g \rangle \! \rangle_{G}$ and the exponent sum of an element $g \in G$ in a letter $x$ by $g_{x}$. Note that this sum is well-defined if all relations of $G$ considered as words of a free group have exponent sum $0$ in $x$. Our main theorem can be deduced from the following proposition in the same way as in the Main Theorem of \cite{ArtOneRel} from \cite[Proposition 2.1]{ArtOneRel}. Therefore we leave this argumentation out.

\begin{proposition} \label{prop1} Let $H=\langle x,b,y_{1},\dots,y_{n} \ | \ [x^{k},b]u \rangle$, where $k,n \ge 1$ and $u$ is a non-trivial reduced word in the letters $y_{1},\dots,y_{n}$. Further, let $r,s \in H \backslash \{1\}$ with $r_{x}=0$. Then $\langle \! \langle r \rangle \! \rangle_{H} = \langle \! \langle s \rangle \! \rangle_{H}$ implies that $r$ is conjugate to $s$ or $s^{-1}$.
\end{proposition}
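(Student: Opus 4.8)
The plan is to analyze the group $H=\langle x,b,y_{1},\dots,y_{n} \mid [x^{k},b]u \rangle$ through its structure as an amalgamated product (or more precisely via the HNN/amalgam decomposition suggested by the letter $x$, whose exponent sum in the relator is $0$). The key observation is that the relator $[x^{k},b]u = x^{k}bx^{-k}b^{-1}u$ has exponent sum $0$ in $x$, so $H$ admits a homomorphism onto $\mathbb{Z}$ sending $x \mapsto 1$ and all other generators to $0$; the kernel $N$ is generated by the $x$-conjugates of $b,y_1,\dots,y_n$ together with the relation. I would first set up this structure explicitly, writing $b_i = x^i b x^{-i}$ and $y_{j,i}=x^i y_j x^{-i}$, and realize $N$ as the fundamental group of a graph of groups (a tree of free groups amalgamated along the translates of the relator), with $x$ acting as the stable letter. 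This is the standard Magnus--Karrass--Solitar rewriting for one-relator groups with a generator of exponent-sum zero, and it is almost certainly how \cite{ArtOneRel} proceeds.

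Next, since $r_x = 0$, both $r$ and (after establishing that $s_x=0$ as well, which follows because $r$ and $s$ have the same normal closure and $x$-exponent-sum is an invariant of the normal closure) $s$ lie in the kernel $N$. The problem thereby reduces to a statement \emph{inside} $N$: I want to show that $r$ and $s$ are conjugate in $H$ to each other or to the inverse. Here the reduction to the new group in the section title becomes relevant — I expect to pass to the subgroup $N$ and exploit that conjugacy in $H$ is detected by the combination of conjugacy within $N$ together with the $x$-shift action. The left/right bases referenced in the section heading should give a normal form for elements of $N$, letting me encode $r$ and $s$ by their expansions in the translated generators $b_i, y_{j,i}$, and the equality of normal closures $\langle\!\langle r\rangle\!\rangle_H = \langle\!\langle s\rangle\!\rangle_H$ should translate into a precise combinatorial relationship between these expansions (equal ``width'' of the support in the $x$-direction, matching leading and trailing terms up to the limit data introduced later).

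The main obstacle I anticipate is precisely what the author flags in the introduction: the absence of the convenient automorphism used in \cite{ArtOneRel}. In the earlier work the extra symmetry presumably let one normalize $r$ and $s$ so that their supports in the $x$-grading were aligned, and then induct on a length parameter. Without it, I would have to control the interaction between the two ``ends'' of the $x$-support independently, which is exactly the role of the $\alpha$- and $\omega$-limits announced in Section~\ref{secalphaomega}: these should capture the asymptotic behaviour of an element as one conjugates by high powers $x^{m}$ and $x^{-m}$, giving two well-defined limiting data at the $+\infty$ and $-\infty$ ends. The strategy would then be to show that $\langle\!\langle r\rangle\!\rangle_H=\langle\!\langle s\rangle\!\rangle_H$ forces the $\alpha$- and $\omega$-limits of $r$ to agree with those of $s$ (or of $s^{-1}$), and to run an induction on the $\alpha$-$\omega$-length $\|\cdot\|_{\alpha,\omega}$. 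The base case $\|\cdot\|_{\alpha,\omega}=0$, handled separately in Section~5, is likely the delicate one, since there the limiting data degenerate and one must argue directly using the Magnus/Freiheitssatz machinery for the one-relator structure of $H$; the positive-length case should reduce, via the limits, to the analogous inductive step in \cite{ArtOneRel}.
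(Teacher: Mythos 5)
Your setup coincides with the paper's: the exponent-sum-zero homomorphism onto $\mathbb{Z}$, the free kernel $N$ with shifted generators $b_i,y_{m,i}$, the reduction of the problem to $N$, and the expectation that $\alpha$- and $\omega$-limits control the two ends of the $x$-support. But from there your plan has two genuine gaps. First, the proposed \emph{induction on the $\alpha$-$\omega$-length} is not a workable scheme: the length $\|r\|_{\alpha,\omega}=\omega_r-\alpha_r+1$ of a non-trivial element can be zero or arbitrarily negative (e.g.\ $b_5b_6^{-1}$ with $k=4$ has length $-2$), so a ``base case $\|\cdot\|_{\alpha,\omega}=0$'' is not the degenerate case, and you give no mechanism by which the hypothesis would let you pass from length $\ell$ to length $\ell-1$. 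What the paper actually does when $\|\widetilde r\|_{\alpha,\omega}\geqslant 1$ is not an induction on length but a minimality argument on a window of translates: choose $i\leqslant j$ with $j-i$ minimal such that $\widetilde s_0$ dies in $N/\langle\!\langle \widetilde r_i,\dots,\widetilde r_j\rangle\!\rangle$, present this quotient as an amalgamated product of $G_{-\infty,\omega_{\widetilde r_j}-1}/\langle\!\langle\widetilde r_i,\dots,\widetilde r_{j-1}\rangle\!\rangle$ and $G_{\alpha_{\widetilde r_j},\infty}/\langle\!\langle\widetilde r_j\rangle\!\rangle$ (this is where Magnus' Freiheitssatz and the cyclically reduced ``suitable'' conjugates are indispensable, to see that the amalgamated subgroup embeds into both factors), and deduce the inequalities $\omega_{\widetilde s_0}\geqslant\omega_{\widetilde r_j}$ and $\alpha_{\widetilde s_0}\leqslant\alpha_{\widetilde r_i}$; symmetry between $r$ and $s$ then forces $i=j$, whence $\langle\!\langle\widetilde s_0\rangle\!\rangle_N=\langle\!\langle\widetilde r_i\rangle\!\rangle_N$ inside the \emph{free} group $N$, and Magnus' theorem for free groups finishes. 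Your sketch never explains how the equality of normal closures gets converted into statements about the limits; this amalgam-plus-minimal-window mechanism is precisely that missing step.

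Second, and more seriously, your treatment of the degenerate case misses its key idea entirely. For $\|\widetilde r\|_{\alpha,\omega}<1$ the paper shows (Lemma~\ref{LemEig2}) that $\widetilde r$ is a word in the $b_i$'s alone, hence lies in the subgroup $\widetilde H=\langle x,b\rangle$ of $H$, which is free of rank $2$ by the Freiheitssatz. Then, using the splitting $H=\widetilde H\ast_{[x^k,b]=u^{-1}}\langle y_1,\dots,y_n\rangle$, it establishes the embedding $\widetilde H/(\!(\widetilde r)\!)\hookrightarrow H/\langle\!\langle\widetilde r\rangle\!\rangle$ by a case distinction on whether some power $[x^k,b]^p$ lies in $(\!(\widetilde r)\!)$: if not, the edge group survives in the quotient of the first factor; if so, the second factor is replaced by the one-relator group $\langle y_1,\dots,y_n\mid u^p\rangle$ and the amalgam still makes sense. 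This embedding is what allows one to transfer $\langle\!\langle\widetilde r\rangle\!\rangle_H=\langle\!\langle\widetilde s\rangle\!\rangle_H$ down to $(\!(\widetilde r)\!)=(\!(\widetilde s)\!)$ in $\widetilde H\cong F_2$ and invoke the Magnus property there (one also needs the observation, which your sketch omits, that $\|\widetilde s\|_{\alpha,\omega}<1$ as well, proved by applying the positive-length analysis with the roles of $r$ and $s$ reversed). Saying the base case should be argued ``directly using the Magnus/Freiheitssatz machinery'' does not identify this reduction, and without it the non-positive-length case remains unproved.
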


We briefly summarise the concept of left and right bases from \cite{ArtOneRel}. For a given group $G$ and an element $g \in G$ we denote by $g_{i}$ the element $x^{-i}gx^{i}$ for $i \in \mathbb{Z}$.

Let $H$ be as in Proposition \ref{prop1}. We consider the homomorphism $\varphi: H \rightarrow \mathbb{Z}$ which sends $x$ to $1$ and $b,y_{1},y_{2},\dots, y_{n}$ to $0$. For each $i \in \mathbb{Z}$ let $Y_{i}=\{ b_{i}, y_{1,i},y_{2,i},\dots, y_{n,i} \}$, where $y_{j,i}:=x^{-i}y_{j}x^{i}$. Using the rewriting process of Reidemeister-Schreier, we obtain that the kernel $N$ of $\varphi$ has the presentation
\begin{eqnarray*}
N = \langle \, \underset{i \in \mathbb{Z}}{\bigcup} Y_{i} \ | \ b_{i}u_{i}=b_{i+k} \ (i \in \mathbb{Z}) \, \rangle.
\end{eqnarray*}

The group $N$ is the free product of the free groups $G_{i}=\langle Y_{i} \ | \ \rangle \ (i \in \mathbb{Z})$ with amalgamation, where $G_{i}$ and $G_{i+k}$ are amalgamated over a cyclic group $Z_{i+k}$ that is generated by $b_{i}u_{i}$ in $G_{i}$ and by $b_{i+k}$ in $G_{i+k}$. This gives $N=N_{t} \ast \dots \ast N_{t+k-1}$ for all $t \in \mathbb{Z}$, where \[ N_{l}=\dots \ \underset{ Z_{l-k}}{\ast} \ G_{l-k} \ \underset{Z_{l}}{\ast} \ G_{l} \underset{Z_{l+k}}{\ast} G_{l+k} \ \underset{Z_{l+2k}}{\ast} \dots \] for $l \in \mathbb{Z}$.

\begin{proposition} \label{Nfree}
For every $i \in \mathbb{Z}$, the group $N$ is free with basis
$$\mathcal{B}(i) =\{b_{i},b_{i+1}, \dots,b_{i+k-1}\} \, \cup \{y_{m,j} \mid 1 \leqslant m \leqslant n, j \in \mathbb{Z}\}.$$
\end{proposition}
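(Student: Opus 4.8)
The plan is to reduce everything to the decomposition $N = N_i \ast \cdots \ast N_{i+k-1}$ recorded above. Since a free product of free groups is again free, with basis the disjoint union of the factor bases, it suffices to show that each $N_l$ is free with basis $\mathcal{C}(l) := \{b_l\} \cup \{\, y_{m,j} \mid 1 \le m \le n,\ j \equiv l \pmod{k}\,\}$. The indices $l = i, i+1, \dots, i+k-1$ run over $k$ consecutive integers and hence over each residue class modulo $k$ exactly once, so $\bigcup_{s=0}^{k-1} \mathcal{C}(i+s) = \{b_i, \dots, b_{i+k-1}\} \cup \{\, y_{m,j} \mid 1 \le m \le n,\ j \in \mathbb{Z}\,\} = \mathcal{B}(i)$, which is exactly the asserted basis.

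The computational workhorse is the following elementary fact: if $A$ and $B$ are free groups and $a \in A$, $b \in B$ are \emph{primitive} (each belongs to some free basis of its group), then the amalgam $A \ast_{\langle a \rangle = \langle b \rangle} B$ is free. Indeed, writing $A = \langle a \rangle \ast A'$ and $B = \langle b \rangle \ast B'$ with free complements $A', B'$, the amalgam is isomorphic to $\langle a \rangle \ast A' \ast B'$, hence free on the identified element $a=b$ together with bases of $A'$ and $B'$. The two generators of the amalgamating group $Z_{l+k}$ are primitive in their factors: $b_{l+k}$ lies in the given basis $Y_{l+k}$ of $G_{l+k}$, and $b_l u_l$ is primitive in $G_l$ because $u_l$ is a word in $y_{1,l}, \dots, y_{n,l}$, so that replacing $b_l$ by $b_l u_l$ is a Nielsen transformation of the basis $Y_l$.

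I would then realise $N_l$ as the directed union of the finite subamalgams $Q_M = G_{l-Mk} \ast_{Z} \cdots \ast_{Z} G_{l+Mk}$ and prove by induction on $M$ that $Q_M$ is free on $\{b_l\} \cup \{\, y_{m,j} \mid 1 \le m \le n,\ |j-l| \le Mk,\ j \equiv l \pmod{k}\,\}$, carrying along the stronger invariant that $b_{l+pk} = b_l w_p$ for a word $w_p$ in the $y$'s, for every $|p| \le M$. In the step $M \to M+1$ I attach $G_{l+(M+1)k}$ at the top: by the invariant $b_{l+Mk} u_{l+Mk} = b_l(w_M u_{l+Mk})$ is primitive in $Q_M$, while $b_{l+(M+1)k}$ is primitive in $G_{l+(M+1)k}$, so the workhorse applies and the new factor contributes only the letters $y_{m,\,l+(M+1)k}$ ($1 \le m \le n$) to the basis, with $b_{l+(M+1)k} = b_{l+Mk}u_{l+Mk}$ re-expressed through $b_l$. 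Attaching $G_{l-(M+1)k}$ at the bottom is symmetric, this time identifying the primitive element $b_{l-(M+1)k} u_{l-(M+1)k}$ of $G_{l-(M+1)k}$ with the (by the invariant, primitive) element $b_{l-Mk}$ of $Q_M$. As every element of $N_l = \bigcup_M Q_M$ lies in some $Q_M$ and the bases are nested, $N_l$ is free on $\mathcal{C}(l)$, completing the reduction.

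The genuinely routine points are the Nielsen transformations and the passage to the directed union, since any relation among finitely many basis elements already holds inside some $Q_M$, where there are none. The step that needs care is the induction itself: one must check that the distinguished generator $b_l$ survives unchanged in the basis while both ends of the bi-infinite chain are grown, and that the amalgamating element stays primitive at every stage. Maintaining the explicit expressions $b_{l+pk} = b_l w_p$ throughout is precisely what keeps the amalgamation under control and makes the induction close.
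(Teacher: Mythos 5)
Your proposal is correct and takes essentially the same approach as the paper: both reduce via $N = N_i \ast \cdots \ast N_{i+k-1}$ to showing each $N_l$ is free, exhaust $N_l$ by finite chain amalgams (your $Q_M$ are the paper's $N_{l,p}$, grown one end at a time instead of both at once), and establish freeness of each with the stated basis by eliminating the redundant $b$-letters, before passing to the nested union of bases. The only difference is the level of detail: where the paper simply invokes Tietze transformations, you make the argument explicit through the primitive-element amalgamation lemma and the invariant $b_{l+pk}=b_lw_p$.
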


\begin{proof} Since $N=N_{i} \ast \dots \ast N_{i+k-1}$, it suffices to show that each $N_l$ has the basis
$$B_l:=\{b_l\}\cup \{y_{m,l+tk}\,|\, 1 \leqslant m \leqslant n, t\in \mathbb{Z}\}.$$
We have $N_{l}= \underset{p \in \mathbb{N}}{\bigcup} N_{l,p}$, where $N_{l,p}= G_{l-pk} \underset{Z_{l-(p-1)k}}{\ast} \dots \ \underset{Z_{l}}{\ast} G_{l}  \underset{Z_{l+k}}{\ast}   \dots  \underset{Z_{l+pk}}{\ast} G_{l+pk}$. Using Tietze transformations, one can show that $N_{l,p}$ is free with basis $B_{l,p}:=\{b_{l}\} \cup \{y_{m,l+tk} \mid 1 \leqslant m \leqslant n, -p \leqslant t \leqslant p \}$. Since $B_{l,p} \subset B_{l,p+1}$, the group $N_{l}$ is free with basis $B_l= \underset{p \in \mathbb{N}}{\bigcup} B_{l,p}$.
\end{proof}

\noindent
{\bf Notation.} Let $G_{i,j} = \langle G_{i},G_{i+1} \dots,G_{j} \rangle$, $G_{i,\infty}= \langle G_{l} \mid l \geqslant i \rangle$ and $G_{-\infty,i}=\langle G_{l} \mid l \leqslant i \rangle$ for all $i,j \in \mathbb{Z}$ with $i \leqslant j$.
Further, we use the
$$\begin{array}{llllllll}
\emph{b-left basis} & \mathcal{B}^{+}(i) & := & \{b_{i},b_{i+1},\dots,b_{i+k-1} \} & \cup & \{y_{m,j} \mid 1 \leqslant m \leqslant n,j \geqslant i \} & \text{of} & G_{i,\infty} \ \ \text{and the} \\
\emph{b-right basis} & \mathcal{B}^{-}(i) & := &  \{b_{i-k+1},b_{i-k+2},\dots,b_{i} \}  & \cup & \{y_{m,j} \mid 1 \leqslant m \leqslant n,j \leqslant i \} & \text{of} & G_{-\infty,i}. \\
\end{array}$$

\begin{remark} \label{remchange}
Let $r \in N$ be written as a word in the letters $b_{j},y_{m,j} \ (1 \leqslant m \leqslant n,j \in \mathbb{Z})$. For an arbitrary $i \in \mathbb{Z}$ we describe how to rewrite $r$ in the basis $\mathcal{B}^{+}(i)$. First, replace each letter $b_{j}$ of $r$ by $b_{j-k}u_{j-k}$, if $j > i+k-1$, and by $b_{j+k}u_{j}^{-1}$, if $j< i$, and reduce. If the resulting word contains $b_{j}$ that do not belong to $\mathcal{B}^{+}(i)$, we repeat this procedure. After finitely many steps, we will obtain the desired form of~$r$. Analogously, we can rewrite $r$ in the basis $\mathcal{B}^{-}(i)$.
\end{remark}

\section{\boldmath{$\alpha$}- and \boldmath{$\omega$}-limits} \label{secalphaomega}

We keep using the notation introduced in Section $2$. An arbitrary element $r\in N\setminus \{1\}$
can be written in many ways as a reduced word in letters $b_i,y_{m,i}$. For example, $b_0u_0u_1^{-1}b_1^{-1}=b_kb_{k+1}^{-1}=
b_{2k}u_k^{-1}u_{k+1}b_{2k+1}^{-1}$.
We give an algorithm which finds a word $r^{*}$ representing $r$
such that the smallest index of letters used in $r^{*}$ is maximum possible. We call such index the \fbox{{\it $\alpha$-limit} of $r$} and denote it by $\alpha_{r}$. In other words, $\alpha_{r}$ is the largest index such that $r$ is an
element of $G_{\alpha_{r}, \infty}$. The following algorithm rewrites an arbitrary word $r \in N\setminus \{1\}$ into the presentation $r^{*}$ of $r$ written in basis $\mathcal{B}^{+}(\alpha_{r})$ (see Lemma \ref{Lemefficiency}).
For a word $r$ in the alphabet $\{b_i\,|\, i\in \mathbb{Z}\}\cup \{y_{m,i}\,|\, 1 \leqslant m \leqslant n,i\in \mathbb{Z}\}$, let $\min(r)$
denote the minimal index of letters of $r$.

\begin{algorithm} \label{algorithm}
Let $r \in N \backslash \{1\}$. Suppose that $r$ is given as a finite word in letters $b_i,b_{i+1},\dots $ and $y_{m,i},y_{m,i+1},\dots$ ($1 \leqslant m \leqslant n$). In particular, $r\in G_{i,\infty}$.
Let $r[0]$ be the reduced word representing $r$ in basis $\mathcal{B}^{+}(i)$. Increasing~$i$ if necessary, we may assume that $i=\min(r[0])$.

\begin{itemize}
\item[(1)] Let $r[1]$ be the word obtained from $r[0]$ by replacement of each occurrence of the letter $b_i$ by $b_{i+k}u_{i}^{-1}$ followed by free reduction.
Then $r[1]$ presents $r$ in the following basis of $G_{i,\infty}$:
\begin{align} \label{Basisplus1}
\{b_{i+1}, b_{i+2}, \dots, b_{i+k} \} \cup \{y_{m,l} \mid 1 \leqslant m \leqslant n,i \leqslant l \}.\tag{$\star$}
\end{align}

\begin{enumerate}
\item[(1a)] If $r[1]$ does not contain a letter $y_{m,i}$ ($1 \leqslant m \leqslant n$), we reset $r[0]:=r[1]$, $i:=\min(r[1])$, and go back to $(1)$. Clearly, the new $i$ is larger than the old one.
\item[(1b)] If $r[1]$ contains a letter $y_{m,i}$ ($1 \leqslant m \leqslant n$),  the algorithm ends with $r^{*}=r[0]$ and $\alpha_r=i$.
\end{enumerate}
\end{itemize}
\end{algorithm}

\begin{lemma} \label{Lemefficiency}
The output $r^{*}$ of Algorithm \ref{algorithm} coincides with the word representing $r$ in Basis $\mathcal{B}^{+}(\alpha_{r})$, where $\alpha_{r}$ is the $\alpha$-limit of $r$.
\end{lemma}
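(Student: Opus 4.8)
The plan is to prove two things: that Algorithm~\ref{algorithm} halts, and that the terminal value of $i$ equals $\alpha_{r}$ with output the reduced word of $r$ in $\mathcal{B}^{+}(\alpha_{r})$. Since $r[0]$ is by construction the reduced word representing $r$ in the basis $\mathcal{B}^{+}(i)$ of $G_{i,\infty}$, and since (as a loop invariant maintained by the clause ``increasing $i$ if necessary'') we always have $\min(r[0])=i$, the second assertion reduces to identifying the terminal $i$ with $\alpha_{r}$. Everything hinges on the following key claim: if $r[0]$ is reduced in $\mathcal{B}^{+}(i)$, then the word $r[1]$ produced in Step~(1) contains a letter $y_{m,i}$ if and only if $r \notin G_{i+1,\infty}$.

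To establish the key claim I would first record that $r[1]$ is the reduced word representing $r$ in the basis \eqref{Basisplus1}. Indeed, the replacement $b_{i}\mapsto b_{i+k}u_{i}^{-1}$ uses the defining relation of $N$ and introduces only letters of \eqref{Basisplus1} (both $b_{i+k}$ and the letters $y_{m,i}$ occurring in $u_{i}$ lie in that basis), so after free reduction $r[1]$ is freely reduced in those letters. For the substantive direction, suppose $r\in G_{i+1,\infty}$. Then $r$ has a reduced word $w$ in $\mathcal{B}^{+}(i+1)$, which in particular contains no $y_{m,i}$. As $\mathcal{B}^{+}(i+1)\subseteq$ \eqref{Basisplus1} and $N$ is free, $w$ is also freely reduced in the letters of \eqref{Basisplus1}; by uniqueness of reduced words in a free group, $w=r[1]$, so $r[1]$ has no $y_{m,i}$. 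Conversely, if $r[1]$ has no $y_{m,i}$, then it is already a word in $\mathcal{B}^{+}(i+1)$, whence $r\in\langle\mathcal{B}^{+}(i+1)\rangle=G_{i+1,\infty}$.

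Granting the key claim, the loop becomes transparent. Throughout we have $r\in G_{i,\infty}$, so $i\leqslant\alpha_{r}$. In case~(1a), $r[1]$ contains neither $y_{m,i}$ (by hypothesis) nor $b_{i}$ (which is absent from \eqref{Basisplus1}), so $\min(r[1])\geqslant i+1$; one checks that after resetting $i:=\min(r[1])$ the word $r[1]=r[0]$ is still reduced in $\mathcal{B}^{+}(i)$ with $\min(r[0])=i$, so the invariant persists and $i$ strictly increases. In case~(1b) the key claim gives $r\notin G_{i+1,\infty}$; together with $r\in G_{i,\infty}$ and the monotonicity $G_{i+1,\infty}\subseteq G_{i,\infty}$, this forces $\alpha_{r}=i$. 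Hence the algorithm halts exactly when $i=\alpha_{r}$, and its output $r^{*}=r[0]$ is the reduced word of $r$ in $\mathcal{B}^{+}(\alpha_{r})$, as claimed.

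Finally, termination amounts to the finiteness of $\alpha_{r}$, i.e.\ to $\bigcap_{j}G_{j,\infty}=\{1\}$, and this is where I expect the only genuine work to lie. I would derive it from the free-product decomposition $N=N_{0}\ast\dots\ast N_{k-1}$ of Proposition~\ref{Nfree}: grouping the $G_{l}$ with $l\geqslant j$ by residue modulo $k$ exhibits $G_{j,\infty}$ as a free product of free factors of the $N_{c}$, each generated by a single $b$-letter of index $\geqslant j$ together with the $y_{m,l}$ of index $\geqslant j$. A fixed nontrivial $r$ has a fixed normal form in $N_{0}\ast\dots\ast N_{k-1}$ whose syllables involve only finitely many letters; comparing normal forms shows that once $j$ exceeds the largest index occurring in those syllables, each syllable would be forced to be a power of a single high-index $b$-letter, and letting $j$ grow further rules this out (distinct such $b$-letters differ by a nontrivial product of the $u$'s and so share no common root). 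Thus every nontrivial syllable, hence $r$ itself, is excluded from $G_{j,\infty}$ for large $j$, so $\alpha_{r}<\infty$ and the loop halts after at most $\alpha_{r}-i$ passes.
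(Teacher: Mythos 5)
Your proposal is correct, and its ``correctness'' half --- identifying the terminal value of $i$ with $\alpha_r$ --- is essentially the paper's own argument: the paper invokes the free splitting $G_{i,\infty}=G_{i+1,\infty}\ast\langle y_{m,i}\mid 1\leqslant m\leqslant n\rangle$, which carries the same content as your uniqueness-of-reduced-words key claim (you additionally prove the converse direction, which you genuinely need, see below). Where you diverge is termination. The paper uses a one-line monovariant: on any pass through (1a), the absence of letters $y_{m,i}$ in $r[1]$ forces every occurrence of $b_i$ in $r[0]$ to sit inside a subword $b_iu_i$, which the substitution turns into the single letter $b_{i+k}$; hence the length of $r[0]$ strictly decreases on every non-terminating pass, so the algorithm halts --- and the finiteness of $\alpha_r$ falls out as a byproduct. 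You instead prove the structural fact $\bigcap_j G_{j,\infty}=\{1\}$ directly, by decomposing $G_{j,\infty}$ into half-chains inside $N_0\ast\dots\ast N_{k-1}$, using uniqueness of free-product normal forms, and observing that distinct letters $b_{j'},b_{j''}$ admit no common root (your exponent-sum/commutation argument); termination then follows because the loop variable $i$ strictly increases while staying $\leqslant\alpha_r$, which is also why you need both directions of the key claim, whereas the paper needs only one. Your compressed step ``comparing normal forms shows each syllable is a power of a single high-index $b$-letter'' is completable and is cleanest via the retraction of the relevant chain $N_c$ killing all $y$-letters of index $\geqslant j$: it fixes the syllable and maps the half-chain onto the cyclic group generated by its lowest $b$-letter. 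As for what each route buys: yours establishes that $\alpha_r$ is finite (hence that the definition ``largest index with $r\in G_{\alpha_r,\infty}$'' makes sense) independently of the algorithm, together with an explicit bound $\alpha_r-i$ on the number of passes; the paper's monovariant is much shorter and does double duty, since exactly the same length-decrease observation is what gets re-used to prove Corollary~\ref{Corfinite}, a fact your argument does not deliver and which would have to be proved separately.
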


\begin{proof}
We shall prove that the algorithm ends and that the integer assigned to $\alpha_{r}$ by the algorithm is really the $\alpha$-limit of $r$.

The algorithm ends since the length of $r[0]$ decreases with each iteration of (1) which does not terminate the algorithm.
This can be verified in the following way:
If we arrive in (1a), then the word $r[1]$ does not contain a letter $y_{m,i}$ ($1 \leqslant m \leqslant n$).
Recall that $r[1]$ was obtained from $r[0]$ by replacing all $b_{i}$ with $b_{i+k}u_{i}^{-1}$ and reducing the resulting word. The fact that there is no letter $y_{m,i}$ ($1 \leqslant m \leqslant n$) left in $r[1]$
means that each $b_{i}$ in $r[0]$ occurred in a subword of the form $b_{i}u_{i}$.
Therefore $r[1]$ is shorter than $r[0]$.

Finally, we show that $r$ is not an element of $G_{i+1,\infty}$, where $i$ as in (1b).
Observe that $$G_{i,\infty}=G_{i+1,\infty}\ast \langle y_{m,i} \mid 1 \leqslant m \leqslant n \rangle.$$ By (1b), $r$, written in basis \eqref{Basisplus1},
uses a letter $y_{m,i}$ ($1 \leqslant m \leqslant n$). Therefore $r\notin G_{i+1,\infty}$ and $\alpha_r=i$.
\end{proof}

\begin{corollary} \label{Corfinite}
Let $r$ be an element from $N \backslash \{1\}$ given as a word in some $b$-left basis. Starting \mbox{Algorithm \ref{algorithm}} with $r$, we either get the same presentation or a presentation of shorter length.
\end{corollary}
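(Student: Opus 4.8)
The plan is to read the two alternatives in the conclusion straight off the structure of Algorithm \ref{algorithm}, using the length bookkeeping already carried out in the proof of Lemma \ref{Lemefficiency}. Write $w$ for the given word representing $r$ in a fixed $b$-left basis $\mathcal{B}^{+}(i)$, and let $r^{*}$ denote the output of the algorithm. The central observation is that the algorithm never lengthens its working word: every pass through (1) that does not terminate (case (1a)) strictly shortens $r[0]$, exactly as computed in the proof of Lemma \ref{Lemefficiency}, whereas the terminating step (1b) returns the current $r[0]$ unchanged. Hence $|r^{*}|\le|r[0]|$, and the whole problem reduces to pinning down precisely when equality, i.e.\ the ``same presentation'', occurs.

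First I would dispose of the case in which $w$ is not freely reduced. Then the reduced word $r[0]$ representing $r$ in $\mathcal{B}^{+}(i)$ already satisfies $|r[0]|<|w|$, and with the estimate above this gives $|r^{*}|\le|r[0]|<|w|$, placing us in the shorter-length alternative. So I may assume from now on that $w$ is freely reduced, whence $r[0]=w$.

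The one genuinely delicate point is to verify that the normalisation ``increase $i$ until $i=\min(r[0])$'' leaves the word $w$ untouched, so that an immediate termination really reproduces the input rather than some other reduced word of equal length. Put $m=\min(w)\ge i$. Every $b$-letter occurring in $w$ has index in the range $[\,m,\,i+k-1\,]$, and since $i\le m$ this range lies inside $[\,m,\,m+k-1\,]$, which is exactly the admissible set of $b$-indices for $\mathcal{B}^{+}(m)$; moreover all $y$-letters of $w$ have index $\ge m$. Consequently $w$ is simultaneously the reduced word for $r$ in $\mathcal{B}^{+}(m)$, so after normalisation we still have $r[0]=w$, now with running index $i=m=\min(w)$.

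Finally I would split according to whether the first pass through (1) halts. Because the running index only increases and the value the algorithm outputs is, by Lemma \ref{Lemefficiency}, the genuine $\alpha$-limit $\alpha_{r}$, we have $\alpha_{r}=\min(w)$ exactly when the first pass falls into (1b), and $\alpha_{r}>\min(w)$ exactly when it falls into (1a). In the former case the algorithm stops at once and returns $r^{*}=r[0]=w$, the same presentation; in the latter case the first pass already shortens $r[0]$ strictly, so $|r^{*}|<|r[0]|=|w|$, a presentation of shorter length. I expect the normalisation-invariance step of the previous paragraph to be the main obstacle; once it is secured, the dichotomy follows immediately from Lemma \ref{Lemefficiency} together with the length estimate isolated in its proof.
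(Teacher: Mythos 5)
Your proof is correct and takes essentially the same route as the paper's: the two load-bearing facts are exactly the ones the paper cites, namely that every non-terminating pass through step (1) strictly shortens $r[0]$ (established in the proof of Lemma \ref{Lemefficiency}) and that the terminating pass (1b) returns the current $r[0]$ unchanged. Your extra care about unreduced input and about the normalisation $i:=\min(r[0])$ not altering an already reduced word in $\mathcal{B}^{+}(i)$ fills in details the paper leaves implicit, but it is a refinement of, not a departure from, the same argument.
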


\begin{proof}
In the proof of Lemma~\ref{Lemefficiency}, we already showed that the length of $r[0]$ in Algorithm \ref{algorithm} decreases with each iteration of (1) which does not terminate the algorithm. Moreover, the last iteration of \mbox{Algorithm~\ref{algorithm}} does not change the current presentation $r[0]$.
\end{proof}

Analogously, we can find a presentation $r_{*}$ of a word $r \in N \backslash \{1\}$ such that the largest used index of letters in $r_{*}$ is minimum possible. We call this index \fbox{{\it $\omega$-limit} of $r$} and denote it by $\omega_{r}$. In
other words, $\omega_{r}$ is the smallest index such that $r$ is an element of $G_{-\infty, \omega_{r}}$. The algorithm to find $r_{*}$ can be received from Algorithm~\ref{algorithm} by ``mirroring'' this algorithm using the $b$-right basis of $G_{-\infty,i}$  and replacements of $b_i$ by $b_{i-k}u_{i-k}$. We define the \fbox{\emph{$\alpha$-$\omega$-length} of $r$} by $||r||_{\alpha,\omega}:=\omega_{r}-\alpha_{r}+1$. Note that the $\alpha$-$\omega$-length of a non-trivial element can be non-positive.

\noindent
\textbf{Examples.}
\begin{itemize}
\item[(i)] Let $r=b_{-2}u_{-2}y_{1,0}b_{4}u_{1}^{-1}$ and $k=3$. To determine $\alpha_{r}$, we write $r=b_{-2}u_{-2}y_{1,0}b_{4}u_{1}^{-1} = b_{1}y_{1,0}b_{4}u_{1}^{-1}$
and get $\alpha_{r}=0$. For $\omega_{r}$ we have $r=b_{-2}u_{-2}y_{1,0}b_{4}u_{1}^{-1} =b_{-2}u_{-2}y_{1,0}b_{1}u_{1}u_{1}^{-1}=b_{-2}u_{-2}y_{1,0}b_{1}$ \mbox{$=b_{-2}u_{-2}y_{1,0}b_{-2}u_{-2}.$}
Thus, $\omega_{r}=0$ and $||r||_{\alpha,\omega}=1$. Note that $r \notin G_{\alpha_{r},\omega_{r}}=G_{0}$.
\item[(ii)] Let $r=b_{5}b_{6}^{-1}$ and $k=4$. Clearly, $\alpha_{r}=5$. To determine $\omega_{r}$, we write $r=b_{5}b_{6}^{-1}= b_{5}u_{2}^{-1}b_{2}^{-1}$ $= b_{1}u_{1}u_{2}^{-1}b_{2}^{-1}=b_{1}u_{1}u_{2}^{-1}u_{-2}^{-1}b_{-2}^{-1}$. Thus, $\omega_{r}=2$ and $||r||_{\alpha,\omega}=-2$.
\end{itemize}

\begin{lemma} \label{LemEig}
Let $r \in N$.
Then $\alpha_{r_{i+j}}=\alpha_{r_{i}}+j$ and $\omega_{r_{i+j}}=\omega_{r_{i}}+j$ for all $i,j \in \mathbb{Z}$. In particular, $||r_{i}||_{\alpha,\omega}=||r_{j}||_{\alpha,\omega}$ for all $i,j \in \mathbb{Z}$.
\end{lemma}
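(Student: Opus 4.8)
The plan is to exploit that the index-shift $g\mapsto g_j$ is realised by a genuine automorphism of $N$, namely conjugation by $x^j$. Since $N=\ker\varphi$ is normal in $H$, the map $\sigma_j\colon N\to N$, $\sigma_j(g)=x^{-j}gx^{j}=g_j$, is an automorphism of $N$ for every $j\in\mathbb{Z}$, and a direct computation gives $\sigma_j(r_i)=(r_i)_j=r_{i+j}$. So the whole statement reduces to tracking how $\sigma_j$ moves the filtrations appearing in the definitions of $\alpha_r$ and $\omega_r$.

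First I would record the action of $\sigma_j$ on the building blocks of the bases. From $\sigma_j(b_i)=b_{i+j}$ and $\sigma_j(y_{m,i})=y_{m,i+j}$ we get $\sigma_j(G_i)=G_{i+j}$ for every $i$, since $G_i=\langle b_i,y_{1,i},\dots,y_{n,i}\rangle$. Because $G_{i,\infty}=\langle G_l\mid l\geqslant i\rangle$ and $G_{-\infty,i}=\langle G_l\mid l\leqslant i\rangle$ are generated by these free factors, it follows at once that $\sigma_j(G_{i,\infty})=G_{i+j,\infty}$ and $\sigma_j(G_{-\infty,i})=G_{-\infty,i+j}$.

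Next I would feed this into the membership characterisations of the limits. Recall that $\alpha_{r_i}$ is the largest index $a$ with $r_i\in G_{a,\infty}$, equivalently $r_i\in G_{\alpha_{r_i},\infty}\setminus G_{\alpha_{r_i}+1,\infty}$. Applying the automorphism $\sigma_j$ and the previous step yields $r_{i+j}=\sigma_j(r_i)\in G_{\alpha_{r_i}+j,\infty}$ while $r_{i+j}\notin G_{\alpha_{r_i}+j+1,\infty}$, whence $\alpha_{r_{i+j}}=\alpha_{r_i}+j$. The identity $\omega_{r_{i+j}}=\omega_{r_i}+j$ follows in exactly the same way from $\sigma_j(G_{-\infty,i})=G_{-\infty,i+j}$ together with the description of $\omega_{r_i}$ as the smallest index $w$ with $r_i\in G_{-\infty,w}$.

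Finally, for the ``in particular'' statement I would take the base index $0$: from $\alpha_{r_i}=\alpha_{r_0}+i$ and $\omega_{r_i}=\omega_{r_0}+i$ the shift by $i$ cancels in the difference, giving $||r_i||_{\alpha,\omega}=\omega_{r_i}-\alpha_{r_i}+1=\omega_{r_0}-\alpha_{r_0}+1$, independent of $i$, which is the asserted equality. I do not expect a genuine obstacle here; the only point that needs care is checking that $\sigma_j$ really preserves the filtrations $G_{i,\infty}$ and $G_{-\infty,i}$ (and not merely permutes the free factors $G_i$), but this is immediate once one observes that $\sigma_j$ sends each generator $b_i$, $y_{m,i}$ to the generator with index shifted by $j$.
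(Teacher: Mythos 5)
Your proposal is correct and is essentially the paper's own argument made explicit: the paper's one-line proof observes that $r_{i+j}$ is obtained from $r_i$ by shifting all (second) indices by $j$, which is exactly the action of your conjugation automorphism $\sigma_j$ on the generators $b_i, y_{m,i}$. Spelling out that $\sigma_j$ carries $G_{i,\infty}$ to $G_{i+j,\infty}$ and $G_{-\infty,i}$ to $G_{-\infty,i+j}$ is a legitimate formalization of the same idea, not a different route.
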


\begin{proof}
Indeed, $r_{i+j}$ can be obtained from $r_i$ by increasing the (second) indices of all letters by $j$.
\end{proof}

\noindent
\textbf{Notation.} Let $r \in N$. For the following two lemmata, let $r(i)$
be the presentation of $r$ written in basis $\mathcal{B}(i)$.

\begin{lemma} \label{Lemcyclical}
Let $r \in N \backslash \{1\}$. The following statements are equivalent:
\begin{itemize}
\item[{\rm (1)}] For some $i\in \mathbb{Z}$, the word $r(i)$ begins with a positive power of a $b$-letter.
\item[{\rm (2)}] For all $i\in \mathbb{Z}$, the word $r(i)$ begins with a positive power of a $b$-letter.
\end{itemize}
\end{lemma}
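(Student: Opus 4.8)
The plan is to reduce the statement to an \emph{adjacent} comparison: I will show that for every $i\in\mathbb{Z}$, the word $r(i)$ begins with a positive power of a $b$-letter if and only if $r(i+1)$ does. Once this is in hand, a trivial induction in both directions along $\mathbb{Z}$ shows that the set of indices $i$ for which $r(i)$ begins with a positive $b$-letter is either all of $\mathbb{Z}$ or empty, which is exactly the asserted equivalence of (1) and (2). So the whole content lies in comparing two consecutive bases.

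First I would record precisely how $\mathcal{B}(i)$ and $\mathcal{B}(i+1)$ differ. By Proposition \ref{Nfree} both are free bases of $N$, and they share all their letters except one: $\mathcal{B}(i)$ contains $b_i$ while $\mathcal{B}(i+1)$ contains $b_{i+k}$, and these are tied by the relation $b_iu_i=b_{i+k}$. Writing $C:=\mathcal{B}(i)\cap\mathcal{B}(i+1)=\{b_{i+1},\dots,b_{i+k-1}\}\cup\{y_{m,j}\}$, I obtain two free-product decompositions $N=\langle C\rangle\ast\langle b_i\rangle=\langle C\rangle\ast\langle b_{i+k}\rangle$, and $r(i+1)$ arises from $r(i)$ by substituting $b_i\mapsto b_{i+k}u_i^{-1}$ and freely reducing. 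The crucial structural feature is that $u_i\in\langle C\rangle$ is a nonempty reduced word in the $y_{m,i}$, so it begins and ends with a $y$-letter, and the substitution touches only the $b_i$-syllables while leaving every letter of $C$ in place.

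The main step is then to track the first letter through this substitution using the free-product normal form $r=c_0b_i^{e_1}c_1\cdots b_i^{e_p}c_p$ with $c_t\in\langle C\rangle$ and $e_t\neq0$, splitting into cases according to the leading syllable. If $c_0=1$, then $r(i)$ begins with $b_i^{\pm1}$; since $b_i^{e_1}$ rewrites as $(b_{i+k}u_i^{-1})^{e_1}$, the word $r(i+1)$ begins with $b_{i+k}$ when $e_1>0$ and with the leading ($y$-)letter of $u_i$ when $e_1<0$, so ``begins with a positive $b$-letter'' holds for $r(i)$ exactly when it holds for $r(i+1)$. If $c_0\neq1$, then the first letter of $r(i)$ is the first letter of $c_0$, which is either a $y$-letter or a shared $b$-letter $b_j$ with $i<j<i+k$; because the substitution leaves $c_0$ untouched on the left, this same letter normally remains the first letter of $r(i+1)$, and the property is again preserved.

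The one subcase needing care, and the main obstacle, is $c_0\neq1$ with $c_0$ beginning with a $y$-letter, where the cancellation between $c_0$ and a leading $u_i$ (which occurs when $e_1<0$) can be total, i.e. $c_0=u_i^{-1}$; then the leading $C$-syllable disappears and a $b_{i+k}$-syllable is exposed at the front of $r(i+1)$. I would resolve this by observing that the exposed letter is $b_{i+k}^{-1}$, a \emph{negative} $b$-letter, so $r(i+1)$ still does not begin with a positive $b$-letter, in agreement with the fact that $r(i)$ begins with a $y$-letter. Since no case turns a non-positive initial letter into a positive $b$-letter or vice versa, the adjacent equivalence follows, and the induction completes the proof.
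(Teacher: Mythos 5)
Your proof is correct and takes essentially the same route as the paper: both reduce to comparing $r(i)$ with $r(i+1)$ via the substitution $b_i \mapsto b_{i+k}u_i^{-1}$ followed by free reduction, the key point being that the new letter $b_{i+k}$ can never cancel against another $b$-letter, so the type of the initial letter is preserved. Your normal-form case analysis (including the subcase $c_0=u_i^{-1}$, where a \emph{negative} $b_{i+k}^{-1}$ is exposed) merely spells out the cancellation argument that the paper compresses into a single sentence.
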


\begin{proof} The word $r(i+1)$ can be obtained from $r(i)$, by replacement of each occurrence of $b_i$ in $r(i)$ by $b_{i+k}u_i^{-1}$
followed by free reduction. The new letter $b_{i+k}$ does not lie in $\{b_i,\dots,b_{i+k-1}\}$. That prevents cancellation between $b$-letters in $r(i+1)$. 
Therefore $r(i)$ starts with a positive exponent of a $b$-letter if and only if $r(i+1)$ starts with a positive exponent of a $b$-letter. This proves the equivalence $(1)\Leftrightarrow (2)$.
\end{proof}

\begin{corollary} \label{lemdef}
For every $r \in N \backslash \{1\}$ there exists a conjugate $\widetilde{r}$ of $r$ 
such that $\widetilde{r}(i)$ is cyclically reduced for each $i\in \mathbb{Z}$.
\end{corollary}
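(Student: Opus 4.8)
The plan is to start from a conjugate of $r$ that is cyclically reduced with respect to one fixed basis and then to repair, basis by basis, the cancellation that may survive at the ``seam''. Throughout, I use that $r(i)$ fails to be cyclically reduced exactly when its first and last letters in $\mathcal{B}(i)$ are mutually inverse, and that two mutually inverse letters are necessarily of the same kind, i.e. both $y$-letters or both $b$-letters. So there are three ways cyclic reduction can fail in a given basis: a $y$-letter cancelling its inverse, a positive $b$-power cancelling the corresponding negative one, or a negative $b$-power cancelling the corresponding positive one. A guiding principle is that whenever $r(i)$ is not cyclically reduced, I may conjugate $r$ by the offending prefix, strictly shortening $r(i)$; the whole problem is to carry out such shortenings so that the result is simultaneously cyclically reduced in \emph{every} basis.

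The main tool for controlling the $b$-letters is Lemma~\ref{Lemcyclical}. Applied to $r$ it shows that the property $P$, ``$r(i)$ begins with a positive power of a $b$-letter'', holds either for all $i$ or for none; applied to $r^{-1}$ (whose reduced word in $\mathcal{B}(i)$ is the formal inverse of $r(i)$) it shows that $Q$, ``$r(i)$ ends with a negative power of a $b$-letter'', is likewise independent of $i$. I then split according to the truth values of $P$ and $Q$. The two mixed cases are immediate: if $P$ holds and $Q$ fails, then in every basis the first letter is a positive $b$-power (in particular never a $y$-letter, so no $y$-cancellation is possible) while the last letter is never a negative $b$-power, so the two can never be inverse and $r(i)$ is cyclically reduced for all $i$; the case $\neg P\wedge Q$ is symmetric. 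In the case $P\wedge Q$ the first and last letters are always $b$-letters, so only $b$-cancellation of the form $b_j\cdots b_j^{-1}$ can occur; tracking the leading and trailing $b$-indices through the elementary change of basis $b_i\mapsto b_{i+k}u_i^{-1}$ shows each of them is preserved modulo $k$, hence lies in a fixed residue class. If these two residue classes differ, the leading and trailing indices never coincide, no cancellation is possible in any basis, and again we are done.

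The remaining situations are $P\wedge Q$ with equal residues and the case $\neg P\wedge\neg Q$ (where both $y$- and negative/positive $b$-cancellations may appear and, as simple examples show, may be present in some bases but absent in others). Here I would run the shortening procedure: while some $r(i)$ is not cyclically reduced, cyclically reduce $r$ in that basis, strictly decreasing $|r(i)|$, and restart. Since $r$ has bounded index-support, only finitely many bases can carry a genuine cancellation once $r$ is cyclically reduced in a reference basis, so at each stage only finitely many bases must be inspected, and the constancy of $P$ and $Q$ keeps the leading/trailing types under control. When the procedure halts, the resulting conjugate $\widetilde{r}$ has no surviving cancellation in any basis and is the desired element.

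The step I expect to be the main obstacle is the termination of this last procedure: cyclically reducing $r(i)$ in one basis can lengthen $r(j)$ in another, so the naive total ``defect'' $\sum_i (|r(i)|-\text{(cyclic length in }\mathcal{B}(i)))$ need not decrease in an obvious way, and one must rule out an infinite cascade in which repairing one basis perpetually reopens others. I plan to resolve this by an induction on the length of a cyclically reduced representative in a fixed basis, using that the index-support of $r$ confines all honest cancellations to a bounded window of bases and that each elementary basis change alters the leading and trailing $b$-indices only by multiples of $k$; this should force the window to shrink and the process to stop, yielding the simultaneously cyclically reduced conjugate $\widetilde{r}$.
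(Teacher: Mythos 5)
Your first three cases are sound, and your mod-$k$ residue argument in the case $P\wedge Q$ is correct — in fact that case is even easier than you make it: after your initial conjugation the leading index $j$ and the trailing index $l$ both lie in $\{0,\dots,k-1\}$, and cyclic reducedness of $r(0)$ forces $j\neq l$, hence $j\not\equiv l \pmod{k}$, so ``$P\wedge Q$ with equal residues'' never occurs at all. The genuine gap is the case $\neg P\wedge\neg Q$, which you delegate to an iterative repair procedure whose termination you admit you cannot prove. This case really does require an argument: for $k=1$, $u=y_1$, the element $r=y_{1,0}b_0$ satisfies $\neg P\wedge\neg Q$ and $r(0)$ is cyclically reduced, yet $r(1)=y_{1,0}b_1y_{1,0}^{-1}$ is not, so repairs are genuinely needed. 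Moreover, your proposed termination measure already fails on this example: the repaired conjugate is $b_1$, and $b_1$ written in $\mathcal{B}(0)$ is $b_0y_{1,0}$, of the same length as $r(0)$; so the length of a cyclically reduced representative in a fixed reference basis need not decrease under a repair step, and you are left with no decreasing quantity. As it stands, the proof is incomplete exactly where you predicted it would be.

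The missing idea is that no iteration is needed: a single, well-chosen cyclic permutation escapes the bad case altogether. If $r(0)$ contains no $b$-letter, then $r(i)=r(0)$ for every $i$ (each $y$-letter belongs to every basis $\mathcal{B}(i)$), and you are done. Otherwise, if the cyclic word $r(0)$ contains a $b$-letter with positive exponent, replace $r$ by the conjugate given by the cyclic permutation beginning with that letter: it is still cyclically reduced in $\mathcal{B}(0)$ and satisfies $P$, so your cases $P\wedge\neg Q$ and $P\wedge Q$ (the latter with automatically distinct residues) finish the proof. If instead all $b$-letters of $r(0)$ carry negative exponents, take the cyclic permutation ending with such a letter; its first letter is then either a $y$-letter or a negative $b$-power, so it satisfies $\neg P\wedge Q$ and your second case applies. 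This one-conjugation strategy is exactly what the paper does (it permutes $r(0)$ cyclically so that the word starts with a positive $b$-power or ends with a negative one, and then invokes Lemma~\ref{Lemcyclical} for $\widetilde{r}$ and $\widetilde{r}^{-1}$), and every tool it needs is already in your write-up; you simply deployed them on the wrong strategy in the last case. Incidentally, your residue argument is a worthwhile supplement to the paper's own proof: the paper asks for a cyclic permutation enjoying exactly one of the two properties, which does not exist for words such as $b_0b_1^{-1}$, and it is precisely your mod-$k$ observation that handles those words.
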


\begin{proof}
Using conjugation, we may assume that $r(0)$ is cyclically reduced.
If $r(0)$ contains only $y$-letters, we are done with the element $\widetilde{r}$ represented by $r(0)$.
Suppose that $r(0)$ contains a $b$-letter. Let $\widetilde{r}$ be the element represented by a cyclic permutation of $r(0)$ which either starts with a positive power of a $b$-letter, or ends with a negative power of a $b$-letter (but not both).
By Lemma~\ref{Lemcyclical}, this $\widetilde{r}$ has the desired property.
\end{proof}

\begin{definition} \label{defsc}
Let $r \in N\backslash \{1\}$. Any element $\widetilde{r}$ as in Lemma~\ref{lemdef} is called \emph{suitable conjugate for $r$}.
\end{definition}

\begin{remark} {\bf (dual structure of $N$)} \label{Remdual}
Denote $b'_i:=b_{-i}u_{-i}$, $y_{m,i}':=y_{m,-i}^{-1}$ ($1 \leqslant m \leqslant n$) and $G_i':=G_{-i}$.
We call the elements $b_i',y_{m,i}'$ ($1 \leqslant m \leqslant n$) {\it dual} to $b_i,y_{m,i}$ ($1 \leqslant m \leqslant n$) and the subgroup $G_i'$ dual to $G_i$.
Expressing $b$-letters and $y$-letters via their dual, we obtain $b_i=b_{-i}'u_{-i}'$, $y_{m,i}=y_{m,-i}'^{-1}$ ($1 \leqslant m \leqslant n$), where $u'_{-j}$ ($j \in \mathbb{Z}$) is the word obtained from $u_{j}$ by replacing each letter $y_{m,j}$ with $y_{m,-j}'^{-1}$. That justifies the terminology. 

Observe that the old relations $b_iu_i=b_{i+k}$ $(i\in \mathbb{Z})$ can be rewritten in dual letters as $b_i'u_i'=b_{i+k}'$ $(i\in \mathbb{Z})$. Thus, the relations preserve their form. 
Moreover, we have $G_i'=\langle b_i',y_{m,i}' \mid 1 \leqslant m \leqslant n \rangle$ that repeats the form $G_i=\langle b_i,y_{m,i} \mid 1 \leqslant m \leqslant n \rangle$.
Other dual objects can be defined analogously: 
For example, $G_{i,\infty}':=\langle G_i',G_{i+1}'\dots \rangle=G_{-\infty,-i}$ 
and $G_{-\infty,i}':=\langle \dots ,G_{i-1}',G_i'\rangle =G_{-i,\infty}$. We use the following bases of $N$\,
($i\in \mathbb{Z}$):
$$\mathcal{B}(i)' =\{b'_{i},b'_{i+1}, \dots,b'_{i+k-1}\} \, \cup \{y'_{m,j} \mid 1 \leqslant m \leqslant n, j \in \mathbb{Z}\}.$$
along with the
$$\begin{array}{llllllll}
\emph{$b'$-left basis} & \mathcal{B}^{+}(i)' & := & \{b_{i}',b_{i+1}',\dots,b_{i+k-1}' \} & \cup & \{y_{m,j}' \mid 1 \leqslant m \leqslant n, j \geqslant i \} & \text{of} & G_{i,\infty}'\ \text{and the} \\
\emph{$b'$-right basis} & \mathcal{B}^{-}(i)' & := &  \{b_{i-k+1}',b_{i-k+2}',\dots,b_{i}' \}  & \cup & \{y_{m,j}' \mid 1 \leqslant m \leqslant n, j \leqslant i \} & \text{of} & G_{-\infty,i}'. \\
\end{array}$$

For $r\in N\backslash \,\{1\}$, let $\alpha'_r$ be the largest index $i$ such that $r$ is an element of $G'_{i,\infty}$ and let  $\omega'_r$ be the smallest index $i$ such that $r$ is an element of $G'_{-\infty,i}$. We denote $||r||_{\alpha',\omega'}:=\omega_r'-\alpha_r'+1$.
\end{remark}

\begin{lemma} \label{LemDual} For any non-trivial $r\in N$ the following statements are valid.  
\begin{enumerate}
\item[\rm (1)] We have $\alpha_r'=-\omega_{r}$, $\omega_r'=-\alpha_{r}$, and $||r||_{\alpha',\omega'}=||r||_{\alpha,\omega}$.

\item[\rm (2)] Suppose that $r$ written in a basis $\mathcal{B}(i)$ is cyclically reduced and begins with a positive power of a $b$-letter. Then $r$ written in the basis $\mathcal{B}(-i-k+1)'$ is cyclically reduced and begins with a positive power of a $b'$-letter.
\end{enumerate}
\end{lemma}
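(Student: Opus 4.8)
The plan is to treat the two assertions separately, deriving (1) formally from the subgroup identifications recorded in Remark~\ref{Remdual} and proving (2) by a direct substitution argument on reduced words.

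For (1) I would simply unwind the definitions, using that $G_{m,\infty}$ decreases and $G_{-\infty,m}$ increases with $m$, so that $r\in G_{m,\infty}$ iff $m\leqslant \alpha_r$ and $r\in G_{-\infty,m}$ iff $m\geqslant \omega_r$. From $G'_{i,\infty}=G_{-\infty,-i}$ the element $r$ lies in $G'_{i,\infty}$ exactly when $-i\geqslant \omega_r$, i.e.\ $i\leqslant -\omega_r$; the largest such $i$ is $-\omega_r$, whence $\alpha'_r=-\omega_r$. Dually, $G'_{-\infty,i}=G_{-i,\infty}$ gives $r\in G'_{-\infty,i}$ iff $-i\leqslant \alpha_r$, i.e.\ $i\geqslant -\alpha_r$, so $\omega'_r=-\alpha_r$. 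Then $||r||_{\alpha',\omega'}=\omega'_r-\alpha'_r+1=-\alpha_r+\omega_r+1=||r||_{\alpha,\omega}$. This part should be routine.

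For (2) the key is the rewriting dictionary from Remark~\ref{Remdual}: $b_j=b'_{-j}u'_{-j}$ and $y_{m,\ell}=(y'_{m,-\ell})^{-1}$, where $u'_{-j}$ is a non-trivial reduced word in the letters $y'_{m,-j}$ (non-trivial because $u$ is). First I would note the index bookkeeping that pins down the target basis: as $j$ runs through $\{i,\dots,i+k-1\}$ the index $-j$ runs through $\{-i-k+1,\dots,-i\}$, which is precisely the block of $b'$-letters occurring in $\mathcal{B}(-i-k+1)'$, and this explains the shift. Starting from the reduced word for $r$ in $\mathcal{B}(i)$, I replace every letter by its image under this dictionary and freely reduce; since $N$ is free on $\mathcal{B}(-i-k+1)'$ (the dual of Proposition~\ref{Nfree}), the result is the reduced word of $r$ in $\mathcal{B}(-i-k+1)'$. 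The crucial structural observation is that no $b'$-letter is ever cancelled during this reduction: a positive $b_j$ contributes $b'_{-j}$ immediately followed by the $y'$-word $u'_{-j}$, a negative $b_j$ contributes $(b'_{-j})^{-1}$ immediately preceded by $(u'_{-j})^{-1}$, and the only way two neighbouring $b'$-letters could meet and cancel would force the corresponding $b$-letters in $r$ to be mutually inverse with equal index, contradicting the reducedness of $r$. In particular the leading $b_{j_1}$ (to a positive power) of $r$ produces a leading $b'_{-j_1}$ that cannot be cancelled, since nothing stands to its left and a $y'$-letter stands to its right; hence $r$ in $\mathcal{B}(-i-k+1)'$ begins with a positive power of a $b'$-letter.

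It then remains to verify cyclic reducedness, which I expect to be the main obstacle and where the hypothesis that $r$ is cyclically reduced in $\mathcal{B}(i)$ is used essentially. Since the reduced dual word begins with the positive $b'$-letter $b'_{-j_1}$, a cyclic cancellation can only occur if its last letter is $(b'_{-j_1})^{-1}$, and I would rule this out by inspecting the final syllable of $r$: if $r$ ends in a $y$-syllable or in a positive $b$-power, the dual word ends in a $y'$-letter or a positive $b'$-power and no cyclic cancellation is possible; if $r$ ends in a negative power of $b_{j_q}$, the dual word ends in $(b'_{-j_q})^{-1}$, and cancellation against the leading $b'_{-j_1}$ would require $j_q=j_1$, i.e.\ $r$ would both begin with a positive power of $b_{j_1}$ and end with a negative power of $b_{j_1}$, contradicting cyclic reducedness of $r$ in $\mathcal{B}(i)$. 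The technical heart of the write-up is to make the ``surviving $b'$-letters'' argument rigorous despite the $y'$-cancellations that genuinely do occur between substituted blocks; I would handle this by tracking the cyclic sequence of $b'$-syllables, which the reduction leaves unchanged, and conclude that the dual word is cyclically reduced.
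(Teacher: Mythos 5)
Your proof is correct and takes essentially the same approach as the paper: the paper's entire proof of this lemma is the one-line remark that both statements ``can be verified straightforward by using the relations $b_i=b'_{-i}u'_{-i}$, $y_{m,i}=(y'_{m,-i})^{-1}$'', and your argument is exactly that verification written out in detail --- part (1) via the subgroup identifications $G'_{i,\infty}=G_{-\infty,-i}$ and $G'_{-\infty,i}=G_{-i,\infty}$ recorded in Remark~\ref{Remdual}, and part (2) by substituting the dual expressions into the reduced word for $r$ and checking that no $b'$-letter can cancel. So you have supplied the details the paper omits rather than followed a different route, and your handling of the two delicate points (survival of the $b'$-letters and the case analysis on the last syllable for cyclic reducedness) is sound.
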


\begin{proof}
Both statements can be verified straightforward by using the relations $b_i=b_{-i}'u_{-i}'$, $y_{m,i}=y_{m,-i}'^{-1}$ ($1 \leqslant m \leqslant n$).
\end{proof}

\section{Proof of Proposition \ref{prop1} for \boldmath{$\widetilde{r}$} with positive  {\boldmath $\alpha$-$\omega$}-length }

\subsection{Properties of \boldmath{$\widetilde{r}$} with positive \boldmath{$\alpha$-$\omega$}-length}

We use the following version of Magnus' Freiheitssatz:

\begin{theorem}[\textbf{Magnus' Freiheitssatz (cf. \cite{ArtMagnus})}] \label{Magnus} Let $F$ be a free group on a basis $X$, and let $g$ be a cyclically reduced word in $F$ with respect to $X$, containing a letter $x \in X$. Then the subgroup generated by $X \backslash \{x\}$ is naturally embedded into the group $F / \langle \! \langle g \rangle \! \rangle_{F}$.
\end{theorem}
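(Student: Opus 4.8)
The plan is to prove this by Magnus' classical induction on the length $|g|$ of the cyclically reduced relator, exploiting exactly the kind of amalgamated/HNN decomposition of the kernel of a homomorphism to $\mathbb{Z}$ that is already set up in Section \ref{leftright}. Write $Y=X\setminus\{x\}$, put $\bar F=F/\langle \! \langle g \rangle \! \rangle_{F}$, and note that the claim is exactly that the composite $\langle Y\rangle \hookrightarrow F \twoheadrightarrow \bar F$ is injective. The base case $|g|=1$ is immediate: then $g=x^{\pm 1}$ and $\bar F$ is free on $Y$. For the inductive step I would split into the two standard cases according to the exponent sums $g_z$ of the generators occurring in $g$.

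First, Case 1, in which some generator $t\in X$ has $g_t=0$. Here I would use $t$ as a stable letter precisely as in Section \ref{leftright}: setting $z_i=t^{-i}zt^{i}$ for $z\in X\setminus\{t\}$, the relator $g$ lies in the kernel of the map $F\to\mathbb{Z}$, $t\mapsto 1$, $X\setminus\{t\}\mapsto 0$, and rewrites as a word $g^{*}$ in the $z_i$ spread over indices $\mu\le i\le M$. The quotient $\bar F$ is then the HNN extension (with stable letter $t$) of the one-relator group $P=\langle z_i : \mu\le i\le M \mid g^{*}\rangle$, whose associated subgroups are generated by $\{z_i:\mu\le i\le M-1\}$ and $\{z_i:\mu+1\le i\le M\}$. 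The decisive gain is that $g^{*}$ is \emph{strictly shorter} than $g$, since every occurrence of $t^{\pm 1}$ has been absorbed into the indices, so the induction hypothesis applies to $P$. Applying the Freiheitssatz for $P$ by omitting a generator that occurs in $g^*$ at the extreme level $M$ (resp. $\mu$) shows that the two associated subgroups are genuinely free on the displayed generators; hence the HNN extension is honest and $P$ embeds. It then remains to locate $\langle Y\rangle$ inside this decomposition and read off injectivity from the normal-form and embedding theorems for HNN extensions, together with the elementary fact that a subset of a free basis freely generates a free group.

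Next, Case 2, in which every generator occurring in $g$ has nonzero exponent sum. If $g$ involves only $x$, then $g$ is a proper power of $x$, so $\bar F$ is a free product of a finite cyclic group with $\langle Y\rangle$ and the claim is clear; hence I may assume a second generator occurs, with $g_x=\alpha\ne 0$. The idea is to reduce to Case 1 by a change of generators that fixes $x$ (hence keeps the distinguished subgroup $\langle Y\rangle$ under control) and drives the exponent sum of some still-occurring generator to $0$. A substitution $x\mapsto x\prod z^{c_z}$ changes the exponent sum of a generator $z$ to $g_z+\alpha c_z$, so I can zero out an occurring $z_0$ whenever $\alpha\mid g_{z_0}$; when no such divisibility holds, I would introduce an auxiliary generator and run a Euclidean-algorithm sequence of Nielsen moves on the exponent-sum vector to manufacture an exponent-sum-zero generator that still appears in the transformed relator, and then invoke Case 1.

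I expect this exponent trick in Case 2 to be the main obstacle. Two things must be verified simultaneously: that the substitution creating an exponent-sum-zero generator can be chosen to carry $\langle Y\rangle$ to a subgroup whose freeness is equivalent to the original assertion, and that the resulting base relator $g^{*}$ is strictly shorter than the original $g$, so that the induction remains well-founded despite the intermediate relator possibly growing in length; the delicate point is precisely the case $\alpha\nmid g_{z}$, where a single move does not suffice and the $\gcd$-type bookkeeping must be tracked through its effect on $\langle Y\rangle$. The second, milder, delicate step is internal to Case 1, namely justifying that the associated subgroups have the asserted free rank — this is exactly where the induction hypothesis is consumed. By comparison, the normal form for HNN extensions and the final identification of $\langle Y\rangle$ as an embedded subgroup are routine once the decomposition is in place.
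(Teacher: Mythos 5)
First, a point of order: the paper does not prove this statement at all. Theorem \ref{Magnus} is imported as a classical result of Magnus (the citation ``cf.\ \cite{ArtMagnus}'' is the entire justification) and is then used as a black box in Corollary \ref{KorEinb} and Lemma \ref{LemH}. So there is no proof of the paper's to compare yours against; your attempt has to stand on its own as a from-scratch proof of the Freiheitssatz. As such, your skeleton is the historically correct one --- Magnus' induction on $|g|$, in its modern HNN formulation (McCool--Schupp, Lyndon--Schupp II.5): exponent-sum-zero case via a stable letter, length drop because occurrences of $t^{\pm1}$ are absorbed into subscripts, and the crucial bookkeeping observation that in Case 2 one only needs the rewritten relator to be shorter than the \emph{original} $g$, not shorter than the intermediate (longer) relator. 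You identify that point correctly.

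However, the two places you flag as ``delicate'' are not merely delicate; as written they are genuine gaps, and they are exactly where the content of the theorem lives. (a) In Case 1, ``locate $\langle Y\rangle$ inside this decomposition and read off injectivity'' conceals a necessary split into sub-cases according to whether the omitted letter $x$ is the stable letter $t$ or not. If $x=t$, then $\langle Y\rangle$ maps onto the subscript-zero generators, and you must normalize subscripts so these lie in the base group and then invoke the inductive Freiheitssatz a second time; if $x\neq t$, then $\langle Y\rangle$ \emph{contains the stable letter}, and injectivity requires a Britton's-lemma argument on reduced HNN sequences combined with the freeness of the associated subgroups. Neither is routine, and the argument differs materially between the two sub-cases. (b) In Case 2, the mechanism you propose --- Nielsen moves / a Euclidean algorithm on the exponent-sum vector --- does not work as stated, for the reason you yourself suspect: an automorphism of $F$ that touches $x$ does not carry $Y$ to a subset of a free basis, so the statement to be proved is not transported along it, and no amount of $\gcd$ bookkeeping fixes that. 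The classical device is not an automorphism of $F$ but a \emph{monomorphism into a larger free group obtained by adjoining a root}: with $\alpha=g_x\neq 0$, $\beta=g_t\neq 0$, set $\varphi(x)=x_1t_1^{-\beta}$, $\varphi(t)=t_1^{\alpha}$, $\varphi(z)=z$ otherwise. Then $\varphi(g)$ has $t_1$-exponent sum $\alpha(-\beta)+\beta\alpha=0$ in a single step, the letter $x_1$ survives cyclic reduction of $\varphi(g)$, the subgroup $\langle Y\rangle$ maps into the basis-generated subgroup $\langle t_1,\, z \ (z\in X\setminus\{x,t\})\rangle$ omitting $x_1$, and rewriting $\varphi(g)$ as in Case 1 produces a relator of length at most $|g|$ minus the number of occurrences of $t^{\pm1}$ in $g$, which keeps the induction well-founded. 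Without this root-adjunction (or an equivalent device), your Case 2 does not close, and hence the proposal as it stands is an accurate roadmap of the classical proof rather than a proof.
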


By abuse of notation, we write $A / \langle \! \langle a \rangle \! \rangle$ instead of $A / \langle \! \langle a \rangle \! \rangle_{A}$, where $A$ is a group and $a \in A$.

\begin{corollary} \label{KorEinb} Let $\widetilde{r} \in N$ be a suitable element. Then we get the embeddings $G_{\alpha_{\widetilde{r}}+1,\infty} \hookrightarrow N / \langle \! \langle \widetilde{r} \rangle \! \rangle$ and $G_{-\infty, \omega_{\widetilde{r}}-1} \hookrightarrow N / \langle \! \langle \widetilde{r} \rangle \! \rangle$.
\end{corollary}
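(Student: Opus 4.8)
The plan is to realise each of the two subgroups as a subgroup of a free factor of $N$ that survives the passage to $N/\langle\!\langle\widetilde{r}\rangle\!\rangle$, and to obtain that survival from a single application of the Freiheitssatz (Theorem \ref{Magnus}). I treat the first embedding in detail; the second then follows by the mirror-image argument (or, equivalently, by passing to the dual structure of Remark \ref{Remdual} through Lemma \ref{LemDual}).

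Write $\alpha=\alpha_{\widetilde{r}}$ and work in the basis $\mathcal{B}(\alpha+1)$ of the free group $N$. The key observation is that $\mathcal{B}^{+}(\alpha+1)\subseteq\mathcal{B}(\alpha+1)$: both sets contain the $b$-letters $b_{\alpha+1},\dots,b_{\alpha+k}$, and $\mathcal{B}(\alpha+1)$ merely has in addition the $y$-letters of index $\leqslant\alpha$. Hence $N$ splits as a free product $N=G_{\alpha+1,\infty}\ast\langle\,y_{m,j}\mid 1\leqslant m\leqslant n,\ j\leqslant\alpha\,\rangle$, with $G_{\alpha+1,\infty}=\langle\mathcal{B}^{+}(\alpha+1)\rangle$ as one free factor.

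Next I locate a letter of $\widetilde{r}$ to delete. Since $\widetilde{r}$ is suitable, its expression $\widetilde{r}(\alpha+1)$ in the basis $\mathcal{B}(\alpha+1)$ is cyclically reduced. Because $\alpha$ is the $\alpha$-limit we have $\widetilde{r}\notin G_{\alpha+1,\infty}$, so in the above decomposition $\widetilde{r}$ does not lie in the factor $G_{\alpha+1,\infty}$; equivalently, the reduced word $\widetilde{r}(\alpha+1)$ contains at least one letter $x\in\{\,y_{m,j}\mid j\leqslant\alpha\,\}$. (Rewriting $\widetilde{r}\in G_{\alpha,\infty}$ from $\mathcal{B}^{+}(\alpha)$ into $\mathcal{B}(\alpha+1)$ as in Remark \ref{remchange} shows that $x$ can be taken of index exactly $\alpha$, but only the existence of such an $x$ is needed.) Now I apply the Freiheitssatz to $g=\widetilde{r}(\alpha+1)$ in $N$ with basis $X=\mathcal{B}(\alpha+1)$ and deleted letter $x$: the subgroup $\langle X\setminus\{x\}\rangle$ is naturally embedded in $N/\langle\!\langle\widetilde{r}\rangle\!\rangle$. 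As $x$ has index $\leqslant\alpha$ whereas every $y$-letter of $\mathcal{B}^{+}(\alpha+1)$ has index $\geqslant\alpha+1$, we get $\mathcal{B}^{+}(\alpha+1)\subseteq X\setminus\{x\}$, so $G_{\alpha+1,\infty}$ is a subgroup of $\langle X\setminus\{x\}\rangle$; restricting the natural embedding to it yields $G_{\alpha+1,\infty}\hookrightarrow N/\langle\!\langle\widetilde{r}\rangle\!\rangle$. For the second embedding one repeats the argument with $\omega=\omega_{\widetilde{r}}$ in the basis $\mathcal{B}(\omega-k)$, using $\mathcal{B}^{-}(\omega-1)\subseteq\mathcal{B}(\omega-k)$ and $\widetilde{r}\notin G_{-\infty,\omega-1}$ to delete a $y$-letter of index $\geqslant\omega$.

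The step needing the most care is the third paragraph. One must be sure that suitability supplies cyclic reducedness in precisely the basis $\mathcal{B}(\alpha+1)$ (it does, since a suitable $\widetilde{r}$ is cyclically reduced in every basis $\mathcal{B}(i)$), and that non-membership in $G_{\alpha+1,\infty}$ is read off correctly from the free-product decomposition. The remaining subtlety is the realisation that one need \emph{not} delete all of the low-index $y$-letters: deleting a single one already produces a free subgroup $\langle X\setminus\{x\}\rangle$ containing $G_{\alpha+1,\infty}$, so the naturality of the Freiheitssatz embedding passes to $G_{\alpha+1,\infty}$ automatically.
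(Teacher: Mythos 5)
Your proof is correct, and it uses the same tool as the paper --- one application of Magnus' Freiheitssatz (Theorem \ref{Magnus}) in a free basis of $N$ extending a $b$-left basis --- but your choice of basis differs from the paper's in a way that genuinely matters. The paper works in $\mathcal{B}(\alpha_{\widetilde{r}})$: it notes that $\widetilde{r}$, written there, is cyclically reduced and contains a letter of $Y_{\alpha_{\widetilde{r}}}$, and declares the embedding immediate. Taken literally this is too quick: the letter of $Y_{\alpha_{\widetilde{r}}}$ occurring in $\widetilde{r}$ may be $b_{\alpha_{\widetilde{r}}}$, or a letter $y_{m,\alpha_{\widetilde{r}}}$ occurring in $u_{\alpha_{\widetilde{r}}}$ (when $n=1$ there are no other possibilities), and deleting such a letter $x$ leaves a subgroup $\langle \mathcal{B}(\alpha_{\widetilde{r}})\setminus\{x\}\rangle$ which does \emph{not} contain $b_{\alpha_{\widetilde{r}}+k}=b_{\alpha_{\widetilde{r}}}u_{\alpha_{\widetilde{r}}}\in G_{\alpha_{\widetilde{r}}+1,\infty}$; so the Freiheitssatz conclusion does not transfer to $G_{\alpha_{\widetilde{r}}+1,\infty}$ without an extra argument. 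Your shift to the basis $\mathcal{B}(\alpha_{\widetilde{r}}+1)$ dissolves exactly this issue: $G_{\alpha_{\widetilde{r}}+1,\infty}$ is the subgroup generated by the sub-basis $\mathcal{B}^{+}(\alpha_{\widetilde{r}}+1)$, the deleted letter is a $y$-letter of index $\leqslant\alpha_{\widetilde{r}}$ (necessarily of index exactly $\alpha_{\widetilde{r}}$, since $\widetilde{r}\in G_{\alpha_{\widetilde{r}},\infty}$), and the containment $\mathcal{B}^{+}(\alpha_{\widetilde{r}}+1)\subseteq\mathcal{B}(\alpha_{\widetilde{r}}+1)\setminus\{x\}$ is automatic. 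Your supporting steps are all sound: suitability gives cyclic reducedness of $\widetilde{r}(i)$ for every $i$ (Corollary \ref{lemdef}), maximality of the $\alpha$-limit gives $\widetilde{r}\notin G_{\alpha_{\widetilde{r}}+1,\infty}$, hence a low-index $y$-letter in the reduced word, and the mirrored argument in $\mathcal{B}(\omega_{\widetilde{r}}-k)$ handles the second embedding. In short, yours is a sharpened version of the paper's argument: same theorem, same idea, but with the basis shifted by one so that ``follows immediately from Theorem \ref{Magnus}'' is actually true as stated.
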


\begin{proof}
By the definition of $\alpha$-limits, $\widetilde{r}$ written in basis $\mathcal{B}^{+}(\alpha_{\widetilde{r}})$ contains at least one letter in $Y_{\alpha_{\widetilde{r}}}$, and by Definition~\ref{defsc}, the suitable element $\widetilde{r}$ is cyclically reduced in this basis. We extend $\mathcal{B}^{+}(\alpha_{\widetilde{r}})$ to a free basis of $N$ by adding the letters $\{y_{m,i} \mid 1 \leqslant m \leqslant n, i<\alpha_{\widetilde{r}}\}$. The presentation of $\widetilde{r}$ will not change by that. Now, $G_{\alpha_{\widetilde{r}}+1,\infty} \hookrightarrow N / \langle \! \langle \widetilde{r} \rangle \! \rangle$ follows immediately from Theorem~\ref{Magnus}. The other embedding follows analogously.
\end{proof}

\begin{lemma} \label{Lembw}
Suppose that $r \in N$ satisfies $||r||_{\alpha,\omega} \geqslant 1$.
Then $r$, written in the basis $\mathcal{B}^{+}(\alpha_{r})$ of $G_{\alpha_{r},\infty}$, contains at least one letter $y_{m,l}$ with $l \geqslant \omega_{r}$.
\end{lemma}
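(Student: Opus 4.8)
The plan is to argue by contraposition. I will assume that $r$, written in the basis $\mathcal{B}^{+}(\alpha_{r})$ of $G_{\alpha_{r},\infty}$, uses \emph{only} $y$-letters $y_{m,l}$ with $l \leqslant \omega_{r}-1$, and from this deduce that $r \in G_{-\infty,\omega_{r}-1}$. Since $\omega_{r}$ is by definition the \emph{smallest} index with $r \in G_{-\infty,\omega_{r}}$, this contradicts the minimality of $\omega_{r}$ and thereby establishes the claim.

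Under the assumption, $r$ is a word in the letters $\{b_{\alpha_{r}},\dots,b_{\alpha_{r}+k-1}\} \cup \{y_{m,j} \mid \alpha_{r}\leqslant j \leqslant \omega_{r}-1\}$, these being exactly the letters of $\mathcal{B}^{+}(\alpha_{r})$ subject to the standing restriction on the $y$-indices. To conclude $r \in G_{-\infty,\omega_{r}-1}$ it suffices to check that each of these generators lies in $G_{-\infty,\omega_{r}-1}$. For the $y$-letters this is immediate: if $j \leqslant \omega_{r}-1$ then $y_{m,j}\in G_{j} \subseteq G_{-\infty,\omega_{r}-1}$. For the $b$-letters I will use the defining relation in the rewritten form $b_{\alpha_{r}+t}=b_{\alpha_{r}+t-k}\,u_{\alpha_{r}+t-k}$ for $0 \leqslant t \leqslant k-1$, in which both factors involve only the single index $\alpha_{r}+t-k$.

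The one place where the hypothesis $\|r\|_{\alpha,\omega}\geqslant 1$ enters — and the only load-bearing step — is the inequality $\alpha_{r}\leqslant \omega_{r}$, which is precisely $\|r\|_{\alpha,\omega}\geqslant 1$. Combined with $t \leqslant k-1$ it yields $\alpha_{r}+t-k \leqslant \alpha_{r}-1 \leqslant \omega_{r}-1$, so that $b_{\alpha_{r}+t-k}\in G_{\alpha_{r}+t-k}\subseteq G_{-\infty,\omega_{r}-1}$ and likewise $u_{\alpha_{r}+t-k}\in G_{\alpha_{r}+t-k}\subseteq G_{-\infty,\omega_{r}-1}$. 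Hence every $b_{\alpha_{r}+t}$ lies in $G_{-\infty,\omega_{r}-1}$, and therefore so does $r$, producing the desired contradiction.

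I do not anticipate a serious obstacle: once one observes that a single application of the relation lowers the index of a $b$-letter (and of its accompanying $u$) by exactly $k$, the statement becomes bookkeeping on indices. The only subtlety to watch is that a \emph{one-step} reduction already suffices, i.e. that $\alpha_{r}+t-k \leqslant \omega_{r}-1$ holds without any need to iterate; this is exactly where $\alpha_{r}\leqslant \omega_{r}$ is invoked, and it explains why the hypothesis $\|r\|_{\alpha,\omega}\geqslant 1$ cannot be dropped.
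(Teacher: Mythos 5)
Your proof is correct and follows essentially the same route as the paper: both argue by contradiction, use the relation $b_{i}=b_{i-k}u_{i-k}$ to push the $b$-letters of $\mathcal{B}^{+}(\alpha_{r})$ down by $k$, and invoke $\alpha_{r}\leqslant\omega_{r}$ to conclude that all resulting indices are at most $\omega_{r}-1$, contradicting the minimality of $\omega_{r}$. The only cosmetic difference is that you verify generator by generator that everything lies in $G_{-\infty,\omega_{r}-1}$, whereas the paper rewrites the word as a whole.
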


\begin{proof}
By assumption, $\alpha_r\leqslant \omega_r$.
To the contrary, suppose that $$r\in \langle b_{\alpha_r},b_{\alpha_r+1}, \dots ,b_{\alpha_r+k-1},y_{m,\alpha_r},y_{m,\alpha_r+1},\dots ,y_{m,\omega_r-1} \mid 1 \leqslant m \leqslant n\rangle.$$ Then, using relations $b_{i}=b_{i-k}u_{i-k}$, we obtain
$$r\in \langle b_{\alpha_r-k},\dots ,b_{\alpha_r-1},y_{m,\alpha_r-k},y_{m,\alpha_r-k+1}, \dots ,y_{m,\omega_r-1}\mid 1 \leqslant m \leqslant n \rangle.$$ Hence $r\in G_{-\infty, \omega_r-1}$. A contradiction.
\end{proof}

\subsection{The structure of some quotients of \boldmath{$N$}}

This section and the next one are very similar to \cite[Section 4 and 5]{ArtOneRel}, but due to some important changes we cannot skip them. Our aim in this subsection is to present $N /  \langle \! \langle \widetilde{r_{i}}, \widetilde{r}_{i+1},\dots, \widetilde{r}_{j} \rangle \! \rangle$ as an amalgamated product. We denote $w_i=b_iu_i$.

\begin{lemma} \label{LemH}
Let $\widetilde{r} \in N$ be a suitable element with $||\widetilde{r}||_{\alpha,\omega} \geqslant 1$, and let $i,j$ be two integers with $i \leqslant j$. We denote $s=\alpha_{\widetilde{r_j}}$ and $t=\omega_{\widetilde{r}_{j}}-1$. Then we have:
\begin{itemize}
\item[{\rm (1)}] $N /  \langle \! \langle \widetilde{r_{i}},\widetilde{r}_{i+1}, \dots, \widetilde{r}_{j} \rangle \! \rangle \ \cong \ G_{-\infty,t} /  \langle \! \langle \widetilde{r}_{i},\widetilde{r}_{i+1}, \dots, \widetilde{r}_{j-1} \rangle \! \rangle \underset{
    \begin{array}{ll}
    w_{t-k+1}\!\! & =b_{t+1},\vspace*{-3mm}\\
    \dots & \vspace*{-3mm}\\
    w_{t} \!\!& =b_{t+k}\\
    \end{array}
    }{\underset{G_{s,t}}{\ast}} G_{s,\infty} / \langle \! \langle \widetilde{r}_{j} \rangle \! \rangle$.
\item[{\rm (2)}] $G_{s+1,\infty}$ naturally embeds into $N/\langle \! \langle \widetilde{r}_{i},\widetilde{r}_{i+1},\dots,\widetilde{r}_{j} \rangle \! \rangle$.
\end{itemize}
\end{lemma}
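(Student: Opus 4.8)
The plan is to prove both statements by induction on the number of relators $j-i+1$, unravelling the normal closure one relator at a time from the left. The key structural input is that the family $\{\widetilde{r}_l\}$ consists of shifts of a single suitable element, so by Lemma~\ref{LemEig} each $\widetilde{r}_l$ has the same $\alpha$-$\omega$-length, and $\alpha_{\widetilde{r}_l}=\alpha_{\widetilde{r}_j}+(l-j)$ while $\omega_{\widetilde{r}_l}=\omega_{\widetilde{r}_j}+(l-j)$. In particular $\widetilde{r}_j$ has the largest $\alpha$-limit $s=\alpha_{\widetilde{r}_j}$ among the relators, which is exactly what lets us split off the rightmost factor $G_{s,\infty}/\langle\!\langle \widetilde{r}_j\rangle\!\rangle$ cleanly.

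First I would establish the base case $i=j$, where the left factor degenerates to $G_{-\infty,t}$ with no relators imposed. Here the claim is just the decomposition of $N$ as an amalgamated product $G_{-\infty,t}\ast_{G_{s,t}}G_{s,\infty}$ modulo $\langle\!\langle\widetilde{r}_j\rangle\!\rangle$ in the right factor, with the identifications $w_{t-k+1}=b_{t+1},\dots,w_t=b_{t+k}$ recording how the amalgamating subgroup $G_{s,t}$ sits inside $G_{s,\infty}$ via the defining relations $b_lu_l=b_{l+k}$ of $N$; since $s=\alpha_{\widetilde{r}_j}$ and $t=\omega_{\widetilde{r}_j}-1$, and $\|\widetilde{r}_j\|_{\alpha,\omega}\geqslant 1$ forces $s\leqslant t$, this amalgam is well-formed. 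That $\widetilde{r}_j$ lies in $G_{s,\infty}$ is immediate from the definition of $\alpha_{\widetilde{r}_j}=s$. For statement (2) in the base case, I would invoke Corollary~\ref{KorEinb}: the embedding $G_{\alpha_{\widetilde{r}_j}+1,\infty}\hookrightarrow N/\langle\!\langle\widetilde{r}_j\rangle\!\rangle$ is exactly $G_{s+1,\infty}\hookrightarrow N/\langle\!\langle\widetilde{r}_j\rangle\!\rangle$.

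For the inductive step I would peel off $\widetilde{r}_j$ first and then apply the induction hypothesis to the remaining relators $\widetilde{r}_i,\dots,\widetilde{r}_{j-1}$. The idea is that quotienting $N$ by $\langle\!\langle\widetilde{r}_j\rangle\!\rangle$ replaces the right factor $G_{s,\infty}$ by $G_{s,\infty}/\langle\!\langle\widetilde{r}_j\rangle\!\rangle$ in the amalgam, leaving the left factor $G_{-\infty,t}$ and the amalgamating subgroup $G_{s,t}$ untouched; the remaining relators $\widetilde{r}_i,\dots,\widetilde{r}_{j-1}$ all have $\alpha$-limit $\leqslant s-1$ and $\omega$-limit $\leqslant t$, so by Lemma~\ref{Lembw} and the index bookkeeping they lie in $G_{-\infty,t}$, i.e.\ entirely in the left factor. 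Thus imposing them only affects that factor, and statement (1) follows by rewriting the quotient of the left factor via the induction hypothesis. The crucial point, which is the main obstacle, is the second statement: to apply the amalgamated-product machinery and conclude that $G_{s+1,\infty}$ still embeds after imposing all relators, I must verify that the relators $\widetilde{r}_i,\dots,\widetilde{r}_{j-1}$ genuinely stay inside the left factor and that no new relation is introduced in the amalgamating subgroup $G_{s,t}$ that would collapse part of the right factor. Here I would use that the amalgamating subgroup embeds into the quotient of the left factor by the induction hypothesis together with the embedding already guaranteed in the right factor by Corollary~\ref{KorEinb} — so the normal form theorem for amalgamated products applies and $G_{s+1,\infty}\subset G_{s,\infty}/\langle\!\langle\widetilde{r}_j\rangle\!\rangle$ survives inside the full quotient. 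Controlling the amalgamating subgroup and checking the relators do not migrate out of the left factor is the delicate bookkeeping I expect to occupy most of the argument.
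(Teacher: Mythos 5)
Your overall architecture agrees with the paper's: the paper fixes $i$ and inducts on $j$ (equivalently, peels off the top relator $\widetilde{r}_j$ as you do), deduces (2) from (1) by composing Corollary~\ref{KorEinb} with the embedding of the right-hand factor of the amalgam, and handles the left-hand factor exactly as you sketch — the amalgamating subgroup sits inside $G_{s,\infty}$, embeds into $N/\langle\!\langle \widetilde{r}_{i},\dots,\widetilde{r}_{j-1}\rangle\!\rangle$ by the inductive statement (2), and since this map factors through $G_{-\infty,t}/\langle\!\langle \widetilde{r}_{i},\dots,\widetilde{r}_{j-1}\rangle\!\rangle$, injectivity into that quotient follows (the paper formalises this with a commutative diagram; your appeal to the induction hypothesis is the same idea, though note the hypothesis gives the embedding into the full quotient, and the factoring step is what transfers it to the left factor).

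The genuine gap is on the right-hand side. The amalgamating subgroup is not $G_{s,t}$ alone but $P=\langle G_{s,t}\cup\{b_{t+1},\dots,b_{t+k}\}\rangle$ (this is what the identifications $w_{t-k+1}=b_{t+1},\dots,w_{t}=b_{t+k}$ encode), and for the amalgam in (1) to be well defined at all — and for the normal form theorem you invoke to be available — you must prove that $P$ embeds into the right factor $G_{s,\infty}/\langle\!\langle \widetilde{r}_{j}\rangle\!\rangle$. Your plan never establishes this: in the base case you declare the amalgam ``well-formed'' on index grounds alone, and in the inductive step you attribute the needed embedding to Corollary~\ref{KorEinb}. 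But that corollary only embeds $G_{s+1,\infty}$, and $P\not\subset G_{s+1,\infty}$, since $P$ contains $b_{s}$ and the letters $y_{m,s}$ of index $s$. This embedding is the real content of the paper's base case: $P$ is generated by the part of the free basis $\mathcal{B}^{+}(s)$ of $G_{s,\infty}$ whose $y$-letters have index $\leqslant t$; since $\widetilde{r}_{j}$ is suitable it is cyclically reduced in $\mathcal{B}^{+}(s)$, and by Lemma~\ref{Lembw} — this is precisely where $||\widetilde{r}||_{\alpha,\omega}\geqslant 1$ enters — it contains a letter $y_{m,\ell}$ with $\ell\geqslant t+1$, i.e.\ a basis letter outside the generating set of $P$, so Magnus' Freiheitssatz (Theorem~\ref{Magnus}) applies. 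Without this hypothesis the step genuinely fails: if $\widetilde{r}_{j}$ were a word in $b$-letters only, $P$ would not embed and the decomposition (1) would be false. (A smaller slip: $||\widetilde{r}_{j}||_{\alpha,\omega}\geqslant 1$ forces only $s\leqslant t+1$, not $s\leqslant t$; when the length equals $1$ the amalgamating subgroup is generated by $\{b_{t+1},\dots,b_{t+k}\}$ alone.)
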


Before we give a formal proof, we consider an illustrated example. This will help
to visualise a lot of technical details in the formulation of lemma.

\noindent
{\bf  Example.} Let $k=4$. We consider the element $\widetilde{r}=\widetilde{r}_0=b_4y_{2,1}y_{1,3}b_0u_0$ and the integers $i=-1$ and $j=2$.

Algorithm~\ref{algorithm} applied to $\widetilde{r}$ gives $\alpha_{\widetilde{r}_0}=1$. Its ``mirrored'' version gives $\omega_{\widetilde{r}_0}=3$.
In particular, $||\widetilde{r}||_{\alpha,\omega}=3$. Furthermore, we have $s=\alpha_{\widetilde{r}_2}=3$ and
$t=\omega_{\widetilde{r}_2}-1=4$.
Then Lemma~\ref{LemH} (1) says that
$$N /  \langle \! \langle \widetilde{r}_{-1},\widetilde{r}_0, \widetilde{r}_1 ,\widetilde{r}_2\rangle \! \rangle \ \cong \ G_{-\infty,4} /  \langle \! \langle \widetilde{r}_{-1},\widetilde{r}_0, \widetilde{r}_1  \rangle \! \rangle \underset{
    \begin{array}{rl}
    w_1\!\!\!\! & =\, b_5,\vspace*{-3mm}\\
    w_2\!\!\!\! & =\, b_6\vspace*{-3mm}\\
    w_3\!\!\!\! & =\, b_7,\vspace*{-3mm}\\
    w_4\!\!\!\! & =\, b_8
    \end{array}
    }{\underset{G_{3,4}}{\ast}} G_{3,\infty} / \langle \! \langle \widetilde{r}_2 \rangle \! \rangle.$$

In Figure 1, the word $\widetilde{r}_0=\widetilde{r}$ is pictured by the (partially dashed) line crossing the blocks $G_0,\dots,G_4$ since $\widetilde{r}$
uses letters with indices from the segment $[0,4]$.
We can represent $\widetilde{r}$ by the word $b_4y_{2,1}y_{1,3}b_4$ that uses letters with indices from the segment $[1,4]$,
and we can represent $\widetilde{r}$ by the word $b_0u_0y_{2,1}y_{1,3}b_0u_0$ that uses letters with indices from the segment $[0,3]$.
To visualise the fact that $\alpha_{\widetilde{r}_0}=1$ and $\omega_{\widetilde{r}_0}=3$, we draw a continuous line crossing the blocks $G_1,G_2,G_3$.

\begin{figure}[H]
\centering
\includegraphics[scale=0.7]{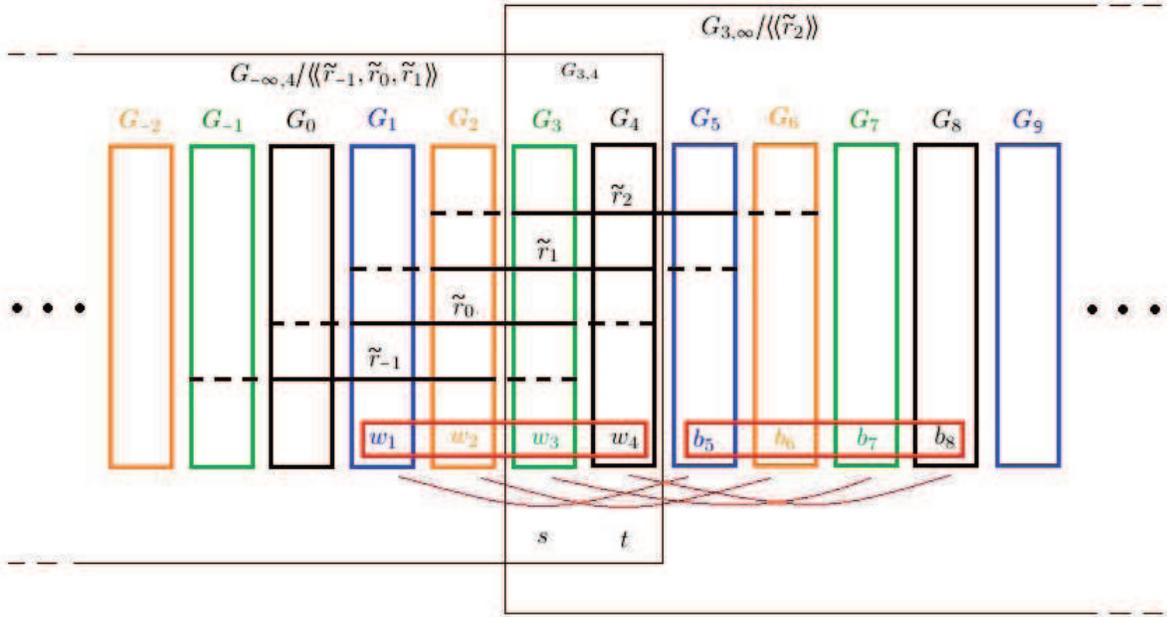}
\caption{Illustration to Lemma~\ref{LemH} for $k=4$, $\widetilde{r}=b_{4}y_{2,1}y_{1,3}b_{0}u_{0}$, $i=-1$, and $j=2$}
\label{Abb1}
\end{figure}

\begin{proof}
First, we prove that (1) implies (2). By Corollary~\ref{KorEinb}, we have $G_{s+1,\infty} \hookrightarrow G_{s,\infty} / \langle \! \langle \widetilde{r}_{j} \rangle \! \rangle$, and if (1) holds, then we have $G_{s,\infty} / \langle \! \langle \widetilde{r}_{j} \rangle \! \rangle \hookrightarrow N /  \langle \! \langle \widetilde{r}_{i},\widetilde{r}_{i+1}, \dots, \widetilde{r}_{j} \rangle \! \rangle$. The composition of these two embeddings gives (2). Now we prove (1) for fixed $i$ by induction on $j$.  \\
\textbf{Base of induction.} For $j=i$ we shall show

$$
%\begin{eqnarray} \label{induction}
N/  \langle \! \langle \widetilde{r}_{j} \rangle \! \rangle \ \cong \ G_{-\infty,t}
\underset{
\begin{array}{ll}
    w_{t-k+1}\!\! & =b_{t+1},\vspace*{-3mm}\\
    \dots & \vspace*{-3mm}\\
    w_{t} \!\!& =b_{t+k}\\
    \end{array}
}
{\underset{G_{s,t}}{\ast}} G_{s,\infty} / \langle \! \langle \widetilde{r}_{j} \rangle \! \rangle.\eqno{(4.1)}
$$
It suffices to show the following claim.

\emph{Claim.} Let $P$ be the subgroup of $N$ generated by $G_{s,t}\cup \{b_{t+1},\dots ,b_{t+k}\}$. Then
\vspace{-1mm}
\begin{enumerate}
\item[(a)] $P$ embeds into $G_{-\infty,t}$ and $ G_{s,\infty} / \langle \! \langle \widetilde{r}_{j} \rangle \! \rangle$.
\vspace{-1mm}
\item[(b)] The abstract amalgamated product in the right side of (4.1) is canonically isomorphic to $N/  \langle \! \langle \widetilde{r}_{j} \rangle \! \rangle$.
\end{enumerate}

\emph{Proof of the claim.} (a) Clearly, $P$ embeds into $G_{-\infty,t}$. So, we show that $P$ embeds into $G_{s,\infty} / \langle \! \langle \widetilde{r}_{j} \rangle \! \rangle$. Note that $P = \langle Y_{s} \cup \dots \cup Y_{t} \cup \{ b_{t+1},\dots ,b_{t+k} \} \rangle = \langle b_{s},b_{s+1}, \dots , b_{s+k-1}, y_{m,s},y_{m,s+1}, \dots , y_{m,t} \mid 1 \leqslant m \leqslant n \rangle$. Thus, the group $P$ is generated by the set
$$\{ b_{s},b_{s+1}, \dots , b_{s+k-1}, y_{m,s},y_{m,s+1}, \dots , y_{m,t} \mid 1 \leqslant m \leqslant n \}. \eqno{(4.2)}$$
This set is a part of the free basis $\mathcal{B}^{+}(s)$ of $G_{s,\infty}$. By Definition~\ref{defsc}, $\widetilde{r}_{j}$ written in $\mathcal{B}^{+}(s)$ is cyclically reduced. Further, by statement (1) of Lemma~\ref{Lembw}, the element $\widetilde{r}_{j}$ written in $\mathcal{B}^{+}(s)$ contains at least one letter $y_{m,\ell}$ with $1 \leqslant m \leqslant n$ and $\ell \geqslant t+1$. In particular, this letter does not lie in
the set~(4.2). By Magnus' Freiheitssatz \mbox{(Theorem~\ref{Magnus})}, $P$ embeds into $G_{s,\infty} / \langle \! \langle \widetilde{r}_{j} \rangle \! \rangle$.

Now, the amalgamated product in (4.1) is well defined. It is easy to check that the groups written in (4.1) are isomorphic by finding a common presentation. This completes the base of induction. \\
\textbf{Inductive step} $i,j \ \rightarrow i,j+1$. We need to show the formula,
where  $s=\alpha_{\widetilde{r_j}}$ and $t=\omega_{\widetilde{r}_{j}}-1$:
$$
N /  \langle \! \langle \widetilde{r}_{i},\widetilde{r}_{i+1}\dots, \widetilde{r}_{j+1} \rangle \! \rangle \ \cong \ G_{-\infty,t+1} /  \langle \! \langle \widetilde{r}_{i},\widetilde{r}_{i+1},\dots, \widetilde{r}_{j} \rangle \! \rangle \underset{
\begin{array}{ll}
    w_{t-k+2}\!\! & =b_{t+2},\vspace*{-3mm}\\
    \dots & \vspace*{-3mm}\\
    w_{t+1} \!\!& =b_{t+k+1}\\
    \end{array}
}{\underset{G_{s+1,t+1}}{\ast}} G_{s+1,\infty} / \langle \! \langle \widetilde{r}_{j+1} \rangle \! \rangle.
\eqno{(4.3)}
$$

Let $P$ be the subgroup of $N$ generated by $G_{s+1,t+1}$ and the set $\{b_{t+2},b_{t+3},\dots,b_{t+k+1} \}$. First, we prove that $P$ canonically embeds into both factors. As above $P$ embeds into $G_{s+1,\infty} / \langle \! \langle \widetilde{r}_{j+1} \rangle \! \rangle$. So we show that $P$ embeds into $G_{-\infty,t+1} /  \langle \! \langle \widetilde{r}_{i}, \widetilde{r}_{i+1}, \dots, \widetilde{r}_{j} \rangle \! \rangle$ using the following commutative diagram:
\begin{displaymath}
	\xymatrix@C=2cm@R=2cm{
        P \ \ar@{^{(}->}[r] \ \ar@{^{(}->}[d] \ar@{-->}[rrd]^{\varphi} & G_{s+1,\infty} \ \ar@{^{(}->}[r]^{\hspace{-0.9cm} by \ (2)} &  \ N /  \langle \! \langle \widetilde{r}_{i},\widetilde{r}_{i+1},\dots, \widetilde{r}_{j} \rangle \! \rangle \\
        G_{-\infty,t+1} \ \ar[rr]_{}  && \ G_{-\infty,t+1} /  \langle \! \langle \widetilde{r}_{i},\widetilde{r}_{i+1},\dots, \widetilde{r}_{j} \rangle \! \rangle \ar@{^{(}->}[u] }
\vspace{0.3cm}
\end{displaymath}
Let $\varphi$ be the composition of the canonical embedding of the subgroup $P$ into $G_{-\infty,t+1}$ and the canonical homomorphism from $G_{-\infty,t+1}$ to the factor group $G_{-\infty,t+1} /  \langle \! \langle \widetilde{r}_{i},\widetilde{r}_{i+1},\dots, \widetilde{r}_{j} \rangle \! \rangle$. It remains to prove that $\varphi$ is an embedding. Considering $P$ as a subgroup of $G_{s+1,\infty}$ and using statement (2) for $i,j$ (recall that (1) implies (2)), we have an embedding of $P$ into $N/\langle \! \langle \widetilde{r}_{i},\widetilde{r}_{i+1},\dots,\widetilde{r}_{j} \rangle \! \rangle$. Since the diagram is commutative, $\varphi: P \rightarrow G_{-\infty,t+1} /  \langle \! \langle \widetilde{r}_{i},\widetilde{r}_{i+1},\dots, \widetilde{r}_{j} \rangle \! \rangle$ is an embedding. Again, it is easy to check that the groups in (4.3) are isomorphic.
\end{proof}

Finally, we need a ``mirrored'' version of Lemma~\ref{LemH}:

\begin{lemma} \label{LemS}
Let $\widetilde{r} \in N$ be a suitable element with $||\widetilde{r}||_{\alpha,\omega} \geqslant 1$, and let $i,j$ be two integers with $i \leqslant j$. We denote $s=\alpha_{\widetilde{r}_{i}}+1$ and $t=\omega_{\widetilde{r}_{i}}$. Then we have:
\begin{itemize}
\item[{\rm (1)}] $N /  \langle \! \langle \widetilde{r_{i}},\widetilde{r}_{i+1}, \dots, \widetilde{r}_{j} \rangle \! \rangle \ \cong \ G_{-\infty,t} / \langle \! \langle \widetilde{r}_{i} \rangle \! \rangle \underset{
    \begin{array}{ll}
    w_{s-k}\!\! & =b_{s},\vspace*{-3mm}\\
    \dots & \vspace*{-3mm}\\
    w_{s-1} \!\!& =b_{s+k-1}\\
    \end{array}
    }{\underset{G_{s,t}}{\ast}} G_{s,\infty} /  \langle \! \langle \widetilde{r}_{i+1}, \widetilde{r}_{i+1},\dots, \widetilde{r}_{j} \rangle \! \rangle $.
\item[{\rm (2)}] $G_{-\infty,t-1}$ naturally embeds into $N/\langle \! \langle \widetilde{r}_{i},\widetilde{r}_{i+1},\dots,\widetilde{r}_{j} \rangle \! \rangle$.
\end{itemize}
\end{lemma}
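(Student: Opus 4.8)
The plan is to deduce Lemma~\ref{LemS} from Lemma~\ref{LemH} by means of the dual structure of $N$ (Remark~\ref{Remdual}), rather than re-running the induction in mirrored form. Since $\{b_i,y_{m,i}\}$ and $\{b_i',y_{m,i}'\}$ are both free bases of $N$ obeying relations of the same shape ($b_iu_i=b_{i+k}$, respectively $b_i'u_i'=b_{i+k}'$), the assignment $b_i\mapsto b_i'$, $y_{m,i}\mapsto y_{m,i}'$ extends to an automorphism $\Phi$ of $N$. First I would record the properties of $\Phi$ that drive the argument. From $\Phi(G_i)=G_i'=G_{-i}$ one gets $\Phi(G_{l,\infty})=G_{-\infty,-l}$, $\Phi(G_{-\infty,l})=G_{-l,\infty}$, and hence $\Phi(G_{a,b})=G_{-b,-a}$. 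By Lemma~\ref{LemDual}(1) the limits reflect, $\alpha_{\Phi^{\pm1}(r)}=-\omega_r$ and $\omega_{\Phi^{\pm1}(r)}=-\alpha_r$, so that $\|\Phi^{\pm1}(r)\|_{\alpha,\omega}=\|r\|_{\alpha,\omega}$. Finally, a check on generators using $b_i'=b_{-i}u_{-i}$ and $y_{m,i}'=y_{m,-i}^{-1}$ shows $\Phi\circ\tau=\tau^{-1}\circ\Phi$, where $\tau$ denotes conjugation by $x$ (so that $r_l=\tau^l(r)$), and consequently $\Phi^{-1}\circ\tau=\tau^{-1}\circ\Phi^{-1}$.

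I would then set $\widehat{r}:=\Phi^{-1}(\widetilde{r})$. Three facts make $\widehat{r}$ an admissible input for Lemma~\ref{LemH}. It is again suitable: by Lemma~\ref{LemDual}(2) (or its variant for an element ending in a negative $b$-power) the suitable element $\widetilde{r}$ written in a dual basis $\mathcal{B}(m)'$ is cyclically reduced and begins with a positive $b'$-power, and this word is precisely $\widehat{r}=\Phi^{-1}(\widetilde{r})$ written in the standard basis $\mathcal{B}(m)$; so by Lemma~\ref{Lemcyclical} (as in the proof of Corollary~\ref{lemdef}) $\widehat{r}$ is cyclically reduced in every standard basis, i.e.\ suitable in the sense of Definition~\ref{defsc}. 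Moreover $\|\widehat{r}\|_{\alpha,\omega}=\|\widetilde{r}\|_{\alpha,\omega}\geqslant 1$, and from $\Phi^{-1}\circ\tau=\tau^{-1}\circ\Phi^{-1}$ it obeys $\Phi^{-1}(\widetilde{r}_l)=\widehat{r}_{-l}$. Consequently $\Phi^{-1}$ carries $\langle\!\langle\widetilde{r}_i,\dots,\widetilde{r}_j\rangle\!\rangle$ onto $\langle\!\langle\widehat{r}_{-j},\dots,\widehat{r}_{-i}\rangle\!\rangle$ and induces an isomorphism of the corresponding quotients of $N$.

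Now I would apply Lemma~\ref{LemH} to the suitable element $\widehat{r}$ with the index pair $(-j,-i)$ (legitimate since $-j\leqslant-i$). Using the reflection of the limits together with Lemma~\ref{LemEig}, its parameters $\widehat{s}=\alpha_{\widehat{r}_{-i}}$ and $\widehat{t}=\omega_{\widehat{r}_{-i}}-1$ compute to $\widehat{s}=-\omega_{\widetilde{r}_i}=-t$ and $\widehat{t}=-\alpha_{\widetilde{r}_i}-1=-s$, with $s,t$ as in Lemma~\ref{LemS}. Transporting the resulting decomposition back through $\Phi$ and using $\Phi(\widehat{r}_{-l})=\widetilde{r}_l$ together with $\Phi(G_{-\infty,-s})=G_{s,\infty}$, $\Phi(G_{-t,\infty})=G_{-\infty,t}$ and $\Phi(G_{-t,-s})=G_{s,t}$, the factor $G_{-\infty,\widehat{t}}/\langle\!\langle\widehat{r}_{-j},\dots,\widehat{r}_{-i-1}\rangle\!\rangle$ becomes $G_{s,\infty}/\langle\!\langle\widetilde{r}_{i+1},\dots,\widetilde{r}_j\rangle\!\rangle$, the factor $G_{\widehat{s},\infty}/\langle\!\langle\widehat{r}_{-i}\rangle\!\rangle$ becomes $G_{-\infty,t}/\langle\!\langle\widetilde{r}_i\rangle\!\rangle$, and the amalgam is taken over $G_{s,t}$; since amalgamated products are symmetric in their two factors, this is exactly statement~(1). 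Applying the same transport to Lemma~\ref{LemH}(2), the embedding $G_{\widehat{s}+1,\infty}=G_{-t+1,\infty}\hookrightarrow N/\langle\!\langle\widehat{r}_{-j},\dots,\widehat{r}_{-i}\rangle\!\rangle$ turns into $G_{-\infty,t-1}\hookrightarrow N/\langle\!\langle\widetilde{r}_i,\dots,\widetilde{r}_j\rangle\!\rangle$, which is statement~(2).

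I expect the main obstacle to be bookkeeping rather than anything conceptual: I must confirm that the $k$ amalgamation relations $w_{\widehat{t}-k+1}=b_{\widehat{t}+1},\dots,w_{\widehat{t}}=b_{\widehat{t}+k}$ of Lemma~\ref{LemH} correspond, after transport, precisely to the relations $w_{s-k}=b_s,\dots,w_{s-1}=b_{s+k-1}$ displayed in Lemma~\ref{LemS}(1). Here it is cleanest to note that these are not extra conditions but genuine identities $w_l=b_{l+k}$ of $N$, recording how the top $b$-generators of the amalgamated subgroup are rewritten inside the factor $G_{-\infty,t}$ via $b_j=b_{j-k}u_{j-k}$; checking that $\Phi$ matches $G_{-t,-s}$ with $G_{s,t}$ compatibly with these rewritings is where a sign or an off-by-$k$ slip is easiest, so I would verify the $k$ boundary identifications of the amalgamating $b$-letters explicitly before declaring the two presentations equal. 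Should the dualisation obscure these identifications, the statement can instead be obtained by literally mirroring the proof of Lemma~\ref{LemH}: prove $(1)\Rightarrow(2)$ from the second embedding of Corollary~\ref{KorEinb}, and then induct by peeling off the bottom relation $\widetilde{r}_i$ on the left-hand factor, with the roles of $\alpha$/$\omega$ and of $G_{\cdot,\infty}$/$G_{-\infty,\cdot}$ interchanged.
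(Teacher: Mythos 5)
Your overall route---transporting Lemma~\ref{LemH} through the dual structure of Remark~\ref{Remdual} and then translating the indices via Lemma~\ref{LemDual}---is exactly the paper's proof, and your index bookkeeping ($\widehat{s}=-t$, $\widehat{t}=-s$, the exchange of the two factors) is the right translation. But the object you build the whole argument on does not exist: the assignment $b_i\mapsto b_i'$, $y_{m,i}\mapsto y_{m,i}'$ ($i\in\mathbb{Z}$) does \emph{not} extend to an automorphism (nor even an endomorphism) of $N$ in general. For it to extend, each defining relation $b_iu_i=b_{i+k}$ must be sent to a valid relation, i.e.\ one needs $b_i'\,\sigma(u_i)=b_{i+k}'$ in $N$, where $\sigma(u_i)$ denotes the \emph{order-preserving} substitution of dual letters into the word $u_i$. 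Now $b_i'=b_{-i}u_{-i}=b_{k-i}$ and $b_{i+k}'=b_{-i}$, so the element required in that slot is $b_{k-i}^{-1}b_{-i}=u_{-i}^{-1}$, which is the letterwise inverse of $u_{-i}$ \emph{with} reversal of the order of its letters; whereas $\sigma(u_i)$ is the letterwise inverse of $u_{-i}$ \emph{without} reversal. These coincide only when $u$ is a palindrome. Concretely, for $k=1$ and $u=y_1y_2$ one gets $\Phi(b_0y_{1,0}y_{2,0})=b_0'y_{1,0}'y_{2,0}'=b_0y_{1,0}y_{2,0}y_{1,0}^{-1}y_{2,0}^{-1}\neq b_0=b_1'=\Phi(b_1)$, so $\Phi$ is not well defined. (Also, $\{b_i,y_{m,i}\mid i\in\mathbb{Z}\}$ is a generating set with relations, not a free basis, so "both free bases obeying relations of the same shape'' cannot be invoked.) Every subsequent step of your argument---$\Phi\circ\tau=\tau^{-1}\circ\Phi$, $\Phi(G_{a,b})=G_{-b,-a}$, the transport of normal closures and of suitability---rests on this nonexistent map, so as written the proof fails.

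The statement actually underlying Remark~\ref{Remdual} is weaker, and it is what the paper uses: the dual generators $b_i',y_{m,i}'$ satisfy relations of the same \emph{form}, $b_i'u_i'=b_{i+k}'$, but the word $u_i'$, written in dual letters, is the word $u$ read \emph{backwards} (since as an element $u_i'$ must equal $u_{-i}^{-1}$). So the duality is not a self-map of $N$ but a second labelling of $N$---equivalently, an isomorphism between $N$ and the analogous group built from the reversed word. This costs nothing, because Corollary~\ref{KorEinb}, Lemma~\ref{Lembw} and Lemma~\ref{LemH} were proved for an \emph{arbitrary} non-trivial reduced word $u$, hence hold verbatim for the dual objects; one then rewrites the dual version of Lemma~\ref{LemH} using Lemma~\ref{LemDual}, with precisely the index computations you performed. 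To repair your proof you must either recast the transport in this relabelling form (claiming no automorphism), or fall back on the alternative you mention at the end---literally mirroring the induction of Lemma~\ref{LemH} with the roles of $\alpha$/$\omega$ and $G_{\cdot,\infty}$/$G_{-\infty,\cdot}$ interchanged. Both repairs work; the first is the paper's argument.
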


\begin{proof} 
Due to the dual structure noticed in Remark \ref{Remdual}, the statements of Corollary \ref{KorEinb}, Lemma \ref{Lembw} and Lemma \ref{LemH} also hold for the dual objects. By rewriting the dual version of Lemma \ref{LemH} with the help of Lemma \ref{LemDual}, we get the desired statement.
\end{proof}

\subsection{Conclusion of the proof for \boldmath{$\widetilde{r}$} with positive \boldmath{$\alpha$-$\omega$}-length} \label{conclusion1}

By assumption of Proposition~\ref{prop1}, we have two elements $r,s \in H$ with the same normal closure and $r_{x}=0$. Thus, $r,s$ are elements of $N$. By Lemma~\ref{lemdef}, we can choose conjugates $\widetilde{r},\widetilde{s}$ of $r,s$ such that $\widetilde{r}$ and $\widetilde{s}$ are suitable elements. Normal closures are invariant under conjugation. So, without loss off generality, we can replace $r,s$ by $\widetilde{r},\widetilde{s}$. In this section we assume $||\widetilde{r}||_{\alpha,\omega} \geqslant 1$. Since the normal closures of $\widetilde{r}$ and $\widetilde{s}$ in $H$ are equal, the normal closures of $\mathcal{R}:=\{ \widetilde{r}_{i} \ | \ i \in \mathbb{Z} \}$ and $\mathcal{S}:=\{ \widetilde{s}_{i} \ | \ i \in \mathbb{Z} \}$ in $N$ are equal. In particular, $\widetilde{s}_{0}$ is trivial in $N / \langle \! \langle \mathcal{R} \rangle \! \rangle$. Thus, there are indices $i,j \in \mathbb{Z}$ such that $\widetilde{s}_{0}$ is trivial in $N / \langle \! \langle \widetilde{r}_{i},\widetilde{r}_{i+1},\dots,\widetilde{r}_{j} \rangle \! \rangle$. We choose a pair $i,j$ with this property and $j-i$ minimal. By Lemma~\ref{LemH}~(1), we have
$$
N /  \langle \! \langle \widetilde{r}_{i},\widetilde{r}_{i+1},\dots, \widetilde{r}_{j} \rangle \! \rangle \ \cong \ G_{-\infty,\omega_{\widetilde{r}_{j}}-1} /  \langle \! \langle \widetilde{r}_{i},\widetilde{r}_{i+1},\dots, \widetilde{r}_{j-1} \rangle \! \rangle \ \underset{A}{\ast} \ G_{\alpha_{\widetilde{r}_{j}},\infty} / \langle \! \langle \widetilde{r}_{j} \rangle \! \rangle\eqno{(4.4)}
$$
for some subgroup $A$.
\begin{lemma}\label{new} Suppose that $||\widetilde{r}||_{\alpha,\omega} \geqslant 1$. Then
\vspace*{-2mm}
\begin{enumerate}
\item[{\rm (1)}] $\omega_{\widetilde{s}_{0}} \geqslant \omega_{\widetilde{r}_{j}}\geqslant \omega_{\widetilde{r}_{i}}$,
\vspace*{-1mm}
\item[{\rm (2)}] $\alpha_{\widetilde{s}_0}\leqslant \alpha_{\widetilde{r}_i}\leqslant \alpha_{\widetilde{r}_j}$,
\vspace*{-1mm}
\item[{\rm (3)}] $
||\widetilde{s}||_{\alpha,\omega} \geqslant ||\widetilde{r}||_{\alpha,\omega}.
$
\end{enumerate}
\end{lemma}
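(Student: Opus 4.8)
The three assertions separate cleanly. The second inequalities in (1) and (2) are immediate from $i\leqslant j$ and Lemma~\ref{LemEig}: since $\widetilde r_0=\widetilde r$, we have $\omega_{\widetilde r_i}=\omega_{\widetilde r}+i\leqslant \omega_{\widetilde r}+j=\omega_{\widetilde r_j}$ and likewise $\alpha_{\widetilde r_i}\leqslant\alpha_{\widetilde r_j}$. Moreover (3) is a formal consequence of the two remaining inequalities: subtracting $\alpha_{\widetilde s_0}\leqslant\alpha_{\widetilde r_i}$ from $\omega_{\widetilde s_0}\geqslant\omega_{\widetilde r_j}$ and using $\omega_{\widetilde r_j}-\alpha_{\widetilde r_i}=(\omega_{\widetilde r}-\alpha_{\widetilde r})+(j-i)\geqslant\omega_{\widetilde r}-\alpha_{\widetilde r}$ (Lemma~\ref{LemEig} again, as $j\geqslant i$) gives $||\widetilde s||_{\alpha,\omega}=\omega_{\widetilde s_0}-\alpha_{\widetilde s_0}+1\geqslant\omega_{\widetilde r}-\alpha_{\widetilde r}+1=||\widetilde r||_{\alpha,\omega}$. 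So the whole lemma reduces to the two non-membership facts $\widetilde s_0\notin G_{-\infty,\omega_{\widetilde r_j}-1}$ (equivalent to $\omega_{\widetilde s_0}\geqslant\omega_{\widetilde r_j}$) and $\widetilde s_0\notin G_{\alpha_{\widetilde r_i}+1,\infty}$ (equivalent to $\alpha_{\widetilde s_0}\leqslant\alpha_{\widetilde r_i}$), which I would prove using the amalgamated decompositions together with the minimality of $j-i$.

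For the first fact, write $Q_m=N/\langle\!\langle\widetilde r_i,\dots,\widetilde r_m\rangle\!\rangle$, with the convention that the empty normal closure gives $Q_{i-1}=N$. By the choice of $i,j$ we have $\widetilde s_0=1$ in $Q_j$, while minimality of $j-i$ forces $\widetilde s_0\neq 1$ in $Q_{j-1}$ (for $j=i$ this reads $\widetilde s_0\neq 1$ in $N$, which holds since $\widetilde s$ is non-trivial). Assume for contradiction $\widetilde s_0\in G_{-\infty,\omega_{\widetilde r_j}-1}$. In the decomposition (4.4) the left factor is $L=G_{-\infty,\omega_{\widetilde r_j}-1}/\langle\!\langle\widetilde r_i,\dots,\widetilde r_{j-1}\rangle\!\rangle$, and indeed $\widetilde r_i,\dots,\widetilde r_{j-1}\in G_{-\infty,\omega_{\widetilde r_j}-1}$ because $\omega_{\widetilde r_m}\leqslant\omega_{\widetilde r_{j-1}}=\omega_{\widetilde r_j}-1$ for $m\leqslant j-1$. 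Under the canonical isomorphism of Lemma~\ref{LemH}(1) the image of $\widetilde s_0$ in $Q_j$ coincides with its image under the factor inclusion $L\hookrightarrow Q_j$; since that image is $1$ and factors of an amalgamated product embed, we get $\widetilde s_0=1$ in $L$. Now consider the triangle $L\xrightarrow{\psi}Q_{j-1}\xrightarrow{\pi}Q_j$, where $\psi$ is induced by $G_{-\infty,\omega_{\widetilde r_j}-1}\hookrightarrow N$ and $\pi$ adjoins the relation $\widetilde r_j=1$. The composite $\pi\psi$ is exactly the factor embedding $L\hookrightarrow Q_j$, hence injective, so $\psi$ is injective; therefore $\widetilde s_0=1$ in $L$ yields $\widetilde s_0=1$ in $Q_{j-1}$, contradicting minimality.

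For the second fact I would argue dually, replacing (4.4)/Lemma~\ref{LemH}(1) by Lemma~\ref{LemS}(1), whose relevant factor is the right factor $R'=G_{\alpha_{\widetilde r_i}+1,\infty}/\langle\!\langle\widetilde r_{i+1},\dots,\widetilde r_j\rangle\!\rangle$; here $\widetilde r_{i+1},\dots,\widetilde r_j\in G_{\alpha_{\widetilde r_i}+1,\infty}$ because $\alpha_{\widetilde r_m}\geqslant\alpha_{\widetilde r_{i+1}}=\alpha_{\widetilde r_i}+1$ for $m\geqslant i+1$. Assuming $\widetilde s_0\in G_{\alpha_{\widetilde r_i}+1,\infty}$ gives $\widetilde s_0=1$ in $R'$; factoring the embedding $R'\hookrightarrow Q_j$ through $N/\langle\!\langle\widetilde r_{i+1},\dots,\widetilde r_j\rangle\!\rangle$ (the map adjoining $\widetilde r_i=1$) shows the induced map $R'\to N/\langle\!\langle\widetilde r_{i+1},\dots,\widetilde r_j\rangle\!\rangle$ is injective, whence $\widetilde s_0=1$ in $N/\langle\!\langle\widetilde r_{i+1},\dots,\widetilde r_j\rangle\!\rangle$. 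This contradicts minimality of $j-i$ applied to the pair $(i+1,j)$ (again reading the empty normal closure as $N$ when $j=i$).

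I expect the main obstacle to be the compatibility bookkeeping rather than any substantial new idea: one must check that the canonical isomorphism supplied by Lemma~\ref{LemH}(1) (and dually Lemma~\ref{LemS}(1), via Remark~\ref{Remdual}) really identifies the image of $G_{-\infty,\omega_{\widetilde r_j}-1}$ modulo the relevant relators with the free factor $L$, so that the triangle $L\xrightarrow{\psi}Q_{j-1}\xrightarrow{\pi}Q_j$ commutes and $\pi\psi$ is precisely the factor embedding. Once this identification is pinned down, the standard fact that each factor of an amalgamated product embeds does all the remaining work, and the two non-membership statements follow symmetrically.
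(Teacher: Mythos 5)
Your proposal is correct and takes essentially the same route as the paper: the paper also proves the first inequalities of (1) and (2) by contradiction, using the left factor of the decomposition (4.4) from Lemma~\ref{LemH}(1) (respectively the right factor from Lemma~\ref{LemS}(1)) plus minimality of $j-i$, and gets the second inequalities and (3) from Lemma~\ref{LemEig}. Your extra bookkeeping (the degenerate case $j=i$, the verification that the relators lie in the subgroup, the commutative triangle) fills in details the paper leaves implicit, though the injectivity of $\psi$ you invoke is not actually needed for the direction you use.
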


\begin{proof} (1) We prove $\omega_{\widetilde{s}_{0}} \geqslant \omega_{\widetilde{r}_{j}}$. Suppose the contrary. Then $\widetilde{s}_{0} \in G_{-\infty,\omega_{\widetilde{r}_{j}}-1}$. Since $\widetilde{s}_{0}$ is trivial in $N /  \langle \! \langle \widetilde{r}_{i},\widetilde{r}_{i+1},\dots, \widetilde{r}_{j} \rangle \! \rangle$, it is also trivial in the left factor of the amalgamated product (4.4). Therefore, $\widetilde{s}_0$ is trivial in $N/  \langle \! \langle \widetilde{r}_{i},\widetilde{r}_{i+1}, \dots, \widetilde{r}_{j-1} \rangle \! \rangle$. This contradicts the minimality of $j-i$. The inequality \mbox{$\omega_{\widetilde{r}_{j}}\geqslant \omega_{\widetilde{r}_{i}}$} follows by Lemma \ref{LemEig} since $j \geqslant i$. Inequalities (2) can be proved analogously with the help of Lemma~\ref{LemS}~(1). Inequality (3) follows straightforward from (1) and (2).
\end{proof}

Since $||\widetilde{s}||_{\alpha,\omega} \geqslant ||\widetilde{r}||_{\alpha,\omega} \geqslant 1$, we have $||\widetilde{r}||_{\alpha,\omega} \geqslant ||\widetilde{s}||_{\alpha,\omega}$ by symmetry and therefore $||\widetilde{s}||_{\alpha,\omega} = ||\widetilde{r}||_{\alpha,\omega}$. By Lemma~\ref{LemEig}, this is equivalent to $||\widetilde{s}_{0}||_{\alpha,\omega} = ||\widetilde{r}_{i}||_{\alpha,\omega}$.
This and the first two statements of Lemma~\ref{new} imply that
$\alpha_{\widetilde{s}_{0}} = \alpha_{\widetilde{r}_{i}} = \alpha_{\widetilde{r}_{j}}$, and $\omega_{\widetilde{r}_{j}}=\omega_{\widetilde{r}_{i}} = \omega_{\widetilde{s}_{0}}$, in particular, $j=i$. Therefore, $\widetilde{s}_{0}$ is trivial in $N / \langle \! \langle \widetilde{r}_{i} \rangle \! \rangle$, and the index $i$ is determined by $\alpha_{\widetilde{s}_{0}} = \alpha_{\widetilde{r}_{i}}$. One can prove in the same way that $\widetilde{r}_{i}$ is trivial in $N / \langle \! \langle \widetilde{s}_{0} \rangle \! \rangle$. Thus, $\langle \! \langle \widetilde{s}_{0} \rangle \! \rangle_{N} =  \langle \! \langle \widetilde{r}_{i} \rangle \! \rangle_{N}$. From Section~\ref{leftright} we know that $N$ is a free group. So by \mbox{Theorem~\ref{Magnus}}, $\widetilde{s}_{0}$ is conjugate to $\widetilde{r}_{i}^{\pm1}$ in $N$. Finally, $\widetilde{r}$ is conjugate to $\widetilde{s}^{\pm1}$ in $H$, and the proof of Proposition~\ref{prop1} in the case $||\widetilde{r}||_{\alpha,\omega} \geqslant 1$ is completed. \hfill $\qed$

\section{Proof of Proposition \ref{prop1} for \boldmath{$\widetilde{r}$} with non-positive  {\boldmath $\alpha$-$\omega$}-length}

\subsection{Properties of \boldmath{$\widetilde{r}$} with non-positive \boldmath{$\alpha$-$\omega$}-length}

\begin{lemma} \label{LemEig2}
Let $r$ be an element in $N$ with $||r||_{\alpha,\omega} < 1$.
Then the reduced word representing $r$ in basis $\mathcal{B}^{+}(\alpha_{r})$ contains only $b_{i}$'s.
\end{lemma}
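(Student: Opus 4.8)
The plan is to reinterpret the hypothesis as a statement about an intersection of two subgroups of $N$. The inequality $||r||_{\alpha,\omega}=\omega_r-\alpha_r+1<1$ is equivalent to $\omega_r\leqslant\alpha_r-1$, so $r$ lies in $G_{-\infty,\omega_r}\subseteq G_{-\infty,\alpha_r-1}$ and, by the definition of the $\alpha$-limit, also in $G_{\alpha_r,\infty}$. Thus $r\in G_{-\infty,\alpha_r-1}\cap G_{\alpha_r,\infty}$, and the whole lemma follows once I show
\[
G_{-\infty,\alpha_r-1}\cap G_{\alpha_r,\infty}=\langle b_{\alpha_r},b_{\alpha_r+1},\dots,b_{\alpha_r+k-1}\rangle .
\]
Indeed, these generators belong to the free basis $\mathcal{B}^{+}(\alpha_r)$, so the reduced word of any element of the right-hand subgroup in $\mathcal{B}^{+}(\alpha_r)$ uses only $b$-letters.

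To compute the intersection I would exploit the free-product decomposition $N=N_{\alpha_r}\ast\dots\ast N_{\alpha_r+k-1}$ from Section~\ref{leftright}. For each residue $l\in\{\alpha_r,\dots,\alpha_r+k-1\}$ the line $N_l$ splits at the edge group $Z_l$ joining $G_{l-k}$ and $G_l$ as $N_l=A_l\ast_{Z_l}B_l$, where $A_l=\langle G_m\mid m\equiv l \pmod k,\ m\leqslant\alpha_r-1\rangle$ and $B_l=\langle G_m\mid m\equiv l\pmod k,\ m\geqslant\alpha_r\rangle$. Since $l-k\leqslant\alpha_r-1<\alpha_r\leqslant l$, the dividing edge genuinely separates the two sides, and inside $N$ one has $Z_l=\langle b_l\rangle$ because $Z_l$ is generated by $b_{l-k}u_{l-k}=b_l$. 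Regrouping the generators by residue yields $G_{-\infty,\alpha_r-1}=A_{\alpha_r}\ast\dots\ast A_{\alpha_r+k-1}$ and $G_{\alpha_r,\infty}=B_{\alpha_r}\ast\dots\ast B_{\alpha_r+k-1}$.

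The computation is then finished by two standard facts. First, in each amalgam $A_l\ast_{Z_l}B_l$ we have $A_l\cap B_l=Z_l=\langle b_l\rangle$. Second, in the free product $N=\ast_l N_l$, for subgroups $A_l,B_l\leqslant N_l$ one has $(\ast_l A_l)\cap(\ast_l B_l)=\ast_l(A_l\cap B_l)$; this I would prove by putting an element $g$ of the intersection into free-product normal form and noting that membership of $g$ in $\ast_l A_l$ forces each syllable into the corresponding $A_{l_i}$, while membership in $\ast_l B_l$ forces it into $B_{l_i}$, hence into $A_{l_i}\cap B_{l_i}$. Combining the two facts gives $G_{-\infty,\alpha_r-1}\cap G_{\alpha_r,\infty}=\ast_l\langle b_l\rangle=\langle b_{\alpha_r},\dots,b_{\alpha_r+k-1}\rangle$, as required. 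The main obstacle is the bookkeeping of the middle paragraph — confirming that the single edge $Z_l$ really partitions each line into its $\leqslant\alpha_r-1$ and $\geqslant\alpha_r$ parts and that $Z_l=\langle b_l\rangle$ holds inside $N$ — since once the amalgam and free-product structures are pinned down, the two normal-form facts and the final conclusion about $b$-letters are routine.
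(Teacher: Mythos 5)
Your proof is correct, but it takes a genuinely different route from the paper. The paper argues by direct word rewriting: it takes the reduced word $r^{-}$ for $r$ in the $b$-right basis $\mathcal{B}^{-}(\omega_r)$, rewrites it into the basis $\mathcal{B}^{+}(\alpha_r)$ via the substitutions $b_i \mapsto b_j u_{j-k}^{-1}u_{j-2k}^{-1}\cdots u_i^{-1}$ (Remark~\ref{remchange}), observes that every $y$-letter occurring before reduction has second index at most $\alpha_r-1$, and then invokes uniqueness of reduced words in a free group: since the reduced form $r^{+}$ lies in $\mathcal{B}^{+}(\alpha_r)$, whose $y$-letters all have index at least $\alpha_r$, no $y$-letter can survive. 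Your argument instead proves the stronger structural identity $G_{-\infty,\alpha_r-1}\cap G_{\alpha_r,\infty}=\langle b_{\alpha_r},\dots,b_{\alpha_r+k-1}\rangle$, using the decomposition $N=N_{\alpha_r}\ast\dots\ast N_{\alpha_r+k-1}$, the splitting of each line $N_l$ at the edge $Z_l=\langle b_l\rangle$ into $A_l\ast_{Z_l}B_l$, the standard facts that $A_l\cap B_l=Z_l$ in an amalgam and that $(\ast_l A_l)\cap(\ast_l B_l)=\ast_l(A_l\cap B_l)$ for subgroups of distinct free factors; your index bookkeeping (that the edge $Z_l$ separates exactly the residues $m\leqslant\alpha_r-1$ from $m\geqslant\alpha_r$, and that $b_{l-k}u_{l-k}=b_l$ in $N$) checks out. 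The trade-off: the paper's proof is shorter and self-contained, relying only on machinery already set up in Section~\ref{leftright} and Remark~\ref{remchange}; yours invokes normal-form theory for free and amalgamated products, but in exchange yields a reusable and arguably more illuminating statement, namely an exact computation of $G_{-\infty,i-1}\cap G_{i,\infty}$ for any $i$, of which the lemma is an immediate corollary.
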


\begin{proof} We have $\omega_r<\alpha_r$. By definition,  $r\in G_{-\infty,\omega_r}$ and $r\in G_{\alpha_r,\infty}$.\\
$\bullet$ Let $r^{-}$ be the reduced word representing $r$ in the basis $\mathcal{B}^{-}(\omega_r)$ of $G_{-\infty,\omega_r}$. Recall that
$$\mathcal{B}^{-}(\omega_r)=\{b_i\,|\, i\in I \}  \cup \{ y_{m,\ell} \mid 1 \leqslant m \leqslant n, \ell \leqslant \omega_r \},$$
where $I=\{\omega_r-k+1,\dots ,\omega_r\}$.\\
\noindent
$\bullet$ Let $r^{+}$ be the reduced word representing $r$ in the basis $\mathcal{B}^{+}(\alpha_r)$ of $G_{\alpha_r,\infty}$. Recall that
$$\mathcal{B}^{+}(\alpha_r)=\{b_j\,|\, j\in J\}  \cup \{y_{m,\ell} \mid 1 \leqslant m \leqslant n, \ell \geqslant \alpha_r \},$$
where $J=\{\alpha_r,\dots,\alpha_r+k-1\}$.

For each $i\in I$, there is a unique $j\in J$ such that $j\equiv i \mod k$.
Then $r^{+}$ can be obtained from $r^{-}$ by replacements of all letters $b_i$ by
$b_ju_{j-k}^{-1}u_{j-2k}^{-1}\dots u_i^{-1}$ followed by reduction (see Remark~\ref{remchange}). The second indices of $y$-letters appearing in these
replacements lie in
the interval $(-\infty, \max J -k]= (-\infty, \alpha_r-1]$.
The second indices of $y$-letters of $r^{-}$ lie in the interval $(-\infty, \omega_r]\subseteq (-\infty, \alpha_r-1]$.
Therefore, the second indices of $y$-letters of $r^{+}$ (if exist) lie in $(-\infty, \alpha_r-1]$.
Hence, $r^{+}$ does not have $y$-letters.
\end{proof}

\subsection{Conclusion of the proof for \boldmath{$\widetilde{r}$} with non-positive \boldmath{$\alpha$-$\omega$}-length} \label{conclusion2}

As in Section \ref{conclusion1}, we can replace the elements $r,s \in N$ from Proposition~\ref{prop1} by suitable elements~$\widetilde{r}, \widetilde{s}$.
In this section, we consider the case $||\widetilde{r}||_{\alpha,\omega} < 1$. Assuming $||\widetilde{s}||_{\alpha,\omega} \geqslant 1$, we immediately get a contradiction by applying Lemma~\ref{new} with reversed roles of $\widetilde{r}$ and $\widetilde{s}$. Thus, we have $||\widetilde{s}||_{\alpha,\omega} < 1$.

By Lemma~\ref{LemEig2}, there are presentations of $\widetilde{r}$ and $\widetilde{s}$ in $N$ which use only $b_i$, $i\in \mathbb{Z}$. So by means of the relations $b_{i}=x^{-i}bx^{i}$, we get presentations of $\widetilde{r}$ and $\widetilde{s}$ as elements in $$H=\langle x,b,y_{1}, \dots , y_{n} \ | \ [x^{k},b]u \rangle$$ that only use $x$ and $b$.

Thus, $\widetilde{r},\widetilde{s}\in \widetilde{H}$, where $\widetilde{H}$ is the subgroup of $H$ generated by $x$ and $b$. We have the presentations

$$
\widetilde{H}=\langle x,b \rangle \cong F_{2} \ \ \text{and} \ \ H=\widetilde{H} \underset{[x^{k},b]=u^{-1}}{\ast} \langle y_{m} \mid 1 \leqslant m \leqslant n \rangle, \eqno{(5.1)}
$$

where $F_{2}$ is the free group in two generators and the isomorphism to $F_{2}$ follows from the Magnus' \mbox{Freiheitssatz}.

{\bf Convention.} For $h\in \widetilde{H}$, let $(\!(h)\!)$ be the normal closure of $h$ in $\widetilde{H}$ and
let $\langle\! \langle h\rangle\! \rangle$ be the normal closure of $h$ in $H$.

Consider the canonical homomorphism $\iota: \widetilde{H} \rightarrow \widetilde{H} / (\! ( \widetilde{r})\! )$.
There are two cases:

{\bf Case 1.} $\ [x^{k},b]^{p} \notin (\! ( \widetilde{r} )\! )$
for all $p \geqslant 1$. \\
In this case $\iota \mid_{\langle [x^{k},b] \rangle}: \widetilde{H} \rightarrow \widetilde{H} /
(\! (\widetilde{r} ) \!)$ is an embedding. We identify the subgroup $\langle [x^{k},b] \rangle$ of $\widetilde{H}$ with its image in $\widetilde{H}/(\! ( \widetilde{r} ) \! )$. Then
\begin{eqnarray*}
H / \langle \! \langle \widetilde{r} \rangle \! \rangle &=& \langle x,b,y_{1},\dots,y_{n} \ | \ [x^{k},b]u, \ \widetilde{r} \rangle \ \ = \ \ \widetilde{H} / (\!( \widetilde{r} )\!) \underset{[x^{k},b]=u^{-1}}{\ast} \langle y_{m} \mid 1 \leqslant m \leqslant n \rangle \ \ = \ \ \widetilde{H} / ( \!( \widetilde{r} )\!) \underset{\mathbb{Z}}{\ast} \mathbb{F}_{n}.
\end{eqnarray*}

{\bf Case 2.} $[x^{k},b]^{p} \in ( \! ( \widetilde{r} ) \! )$ for some $p\geqslant 1$. \\
We choose the smallest $p$ with this property.
Using Tietze transformations, we write:
\begin{eqnarray*}
H / \langle \! \langle \widetilde{r} \rangle \! \rangle &=& \langle x,b,y_{1},\dots,y_{n} \ | \ [x^{k},b]u, \ \widetilde{r} \, \rangle \ = \ \langle x,b,y_{1},\dots , y_{n} \ | \ [x^{k},b]u, \ \widetilde{r}, \ [x^{k},b]^{-p} \, \rangle \\
&=& \langle x,b,y_{1}, \dots , y_{n} \ | \ [x^{k},b]u, \ \widetilde{r}, \ u^{p} \, \rangle \ = \ \widetilde{H} / (\! ( \widetilde{r} )\! ) \underset{[x^{k},b]=u^{-1}}{\ast} \langle y_{1}, \dots , y_{n} \ | \ u^{p} \, \rangle.
\end{eqnarray*}
Thus, in both cases, we have the embedding $\widetilde{H} / ( \! (\widetilde{r} )\! ) \hookrightarrow H / \langle \! \langle \widetilde{r} \rangle \! \rangle$. \\

We already know that $\widetilde{s}$ is an element of the subgroup $\widetilde{H}$ of $H$. Further, $\widetilde{s}$ is trivial in $H / \langle \! \langle \widetilde{r} \rangle \! \rangle$ since $\langle \! \langle \widetilde{r} \rangle \! \rangle = \langle \! \langle \widetilde{s} \rangle \! \rangle$. Now, we use the embedding $\widetilde{H} / (\! ( \widetilde{r} )\! ) \hookrightarrow H / \langle \! \langle \widetilde{r} \rangle \! \rangle$ to conclude that $\widetilde{s}$ is trivial in $\widetilde{H} / ( \! ( \widetilde{r} ) \! )$. By symmetry, $\widetilde{r}$ is trivial in $\widetilde{H} / ( \! ( \widetilde{s} )\! )$, and we get $(\! ( \widetilde{r} ) \! ) = (\! ( \widetilde{s} )\! )$.

As we have seen in (5.1), the group $\widetilde{H}$ is isomorphic to the free group of rank 2, and this group possesses the Magnus property (cf. \cite{ArtMagnus}). Hence, $\widetilde{r}$ is conjugate to $\widetilde{s}^{\pm1}$ in $\widetilde{H}$, and since $\widetilde{H} \hookrightarrow H$, the element $\widetilde{r}$ is conjugate to $\widetilde{s}^{\pm1}$ in $H$. \hfill $\qed$ \\

\textbf{Acknowledgements.} This work is based on my M.Sc. thesis, Heinrich Heine Universit\"at D\"usseldorf, 2015. I would like to express my special thanks to my supervisor Professor Oleg Bogopolski for the subject proposal and the support during and after the preparation of my M.Sc. thesis. I also want to thank Professor Benjamin Klopsch for his editorial help.

\bibliography{Literatur}
    \bibliographystyle{alpha}

\end{document}